\documentclass{amsart}

\usepackage[english]{babel}
\usepackage[T1]{fontenc}
\usepackage{ae}
\usepackage[mathscr]{eucal}
\usepackage{amsthm}
\usepackage{amsfonts}
\usepackage{amsmath}
\usepackage{amssymb}
\usepackage[hmarginratio = 1:1]{geometry}
\usepackage{layout}
\usepackage{enumitem}
\usepackage[hidelinks]{hyperref}
\usepackage[utf8]{inputenc}
\usepackage{indentfirst}
\usepackage[dvipsnames]{xcolor}
\usepackage{mathrsfs}


\newtheorem*{theorem a}{Theorem A}
\newtheorem*{theorem*}{Theorem}
\newtheorem{theorem}{Theorem}[section]
\newtheorem{lemma}[theorem]{Lemma}
\newtheorem{corollary}[theorem]{Corollary}
\newtheorem{proposition}[theorem]{Proposition}

\theoremstyle{definition}

\theoremstyle{remark}

\numberwithin{equation}{section}

\begin{document}

\title[Rieffel's deformed algebra as Heisenberg smooth operators]{On Rieffel's conjecture characterizing a deformed algebra as Heisenberg smooth operators}

\author{Rodrigo A. H. M. Cabral}
\address{Mathematics Department, Institute of Mathematics and Statistics, University of S\~ao Paulo (IME-USP), BR-05508-090, S\~ao Paulo, SP, Brazil.}
\email{rahmc@ime.usp.br; rodrigoahmc@gmail.com}

\author{Severino T. Melo}
\address{Mathematics Department, Institute of Mathematics and Statistics, University of S\~ao Paulo (IME-USP), BR-05508-090, S\~ao Paulo, SP, Brazil.}
\email{toscano@ime.usp.br}

\dedicatory{This paper is dedicated to the memory of Marcela I.~Merklen (1969--2018)}

\begin{abstract}
Let $\mathscr{A}$ be a unital C$^*$-algebra and $E_n$ be the Hilbert $\mathscr{A}$-module defined as the completion of the $\mathscr{A}$-valued Schwartz function space $\mathcal{S}^\mathscr{A}(\mathbb{R}^n)$ with respect to the norm $\|f\|_2 := \left\| \int_{\mathbb{R}^n} f(x)^*f(x) \, dx \right\|_\mathscr{A}^{1 / 2}$. Also, let $\text{Ad }\mathcal{U}$ be the canonical action of the $(2n + 1)$-dimensional Heisenberg group by conjugation on the algebra of adjointable operators on $E_n$ and let $J$ be a skew-symmetric linear transformation on $\mathbb{R}^n$. We characterize the smooth vectors under $\text{Ad }\mathcal{U}$ which commute with a certain algebra of right multiplication operators $R_h$, with $h \in \mathcal{S}^\mathscr{A}(\mathbb{R}^n)$, where the product is ``twisted'' with respect to $J$ according to a deformation quantization procedure introduced by M.A.~Rieffel. More precisely, we establish that they coincide with an algebra of left multiplication operators and show that this solves, in particular, a conjecture posed by Rieffel. 
\end{abstract}

\subjclass[2020]{47G30, 43A65, 46L87, 46L08}

\keywords{}

\maketitle

\section{Introduction}

In his AMS Memoir \cite{rieffel}, M.A.~Rieffel defined an algebra of pseudodifferential operators with C$^*$-algebra valued symbols, and used it to construct a strict deformation quantization of a C$^*$-algebra with an action of $\mathbb{R}^n$ with respect to a given skew-symmetric linear transformation. We start by describing Rieffel's algebra.

Given a C$^*$-algebra $\mathscr{A}$ with C$^*$-norm $\|\, \cdot \,\|_\mathscr{A}$ and unit $1_\mathscr{A}$, we denote by $\mathcal{S}^\mathscr{A}(\mathbb{R}^n)$ the space of all $\mathscr{A}$-valued smooth functions on $\mathbb{R}^n$ which, together with all their partial derivatives, decay to 0 at infinity more rapidly than the inverse of any polynomial on $\mathbb{R}^n$. The function space $\mathcal{S}^\mathscr{A}(\mathbb{R}^n)$ has a canonical Fr\'echet space structure defined by the norms
\begin{equation}
p_{\alpha, \beta}(f) := \sup_{x \in \mathbb{R}^n} \|x^\alpha \, \partial_x^\beta f(x)\|_\mathscr{A}, \qquad f \in \mathcal{S}^\mathscr{A}(\mathbb{R}^n), \, \alpha, \beta \in \mathbb{N}^n.
\end{equation}
We also define $\mathcal{B}^\mathscr{A}(\mathbb{R}^n)$ as the space of $\mathscr{A}$-valued bounded smooth functions on $\mathbb{R}^n$ whose partial derivatives of all orders are also bounded. When $\mathscr{A} = \mathbb{C}$, we will write simply $\mathcal{S}(\mathbb{R}^n)$ and $\mathcal{B}(\mathbb{R}^n)$, respectively. Another space which plays a central role in \cite{rieffel} and in this paper is the Hilbert $\mathscr{A}$-module $E_n$, defined as the Banach space completion of
$\mathcal{S}^\mathscr{A}(\mathbb{R}^n)$ with respect to the norm
\begin{equation} \label{eq:E_n}
\|f\|_2 := \left\| \int_{\mathbb{R}^n} f(x)^*f(x) \, dx \right\|_\mathscr{A}^{1 / 2}, \qquad f \in \mathcal{S}^\mathscr{A}(\mathbb{R}^n).
\end{equation}
This norm is induced by an $\mathscr{A}$-valued inner product \cite[p.~2]{lance} $\langle \, \cdot \,, \cdot \, \rangle_{E_n}$ on $E_n$, defined as the unique continuous extension to $E_n \times E_n$ of the map
\begin{equation}
(f, g) \longmapsto \int_{\mathbb{R}^n} f(x)^*g(x) \, dx
\end{equation}
on $\mathcal{S}^\mathscr{A}(\mathbb{R}^n) \times \mathcal{S}^\mathscr{A}(\mathbb{R}^n)$. The space $L^2(\mathbb{R}^n, \mathscr{A})$ of (equivalence classes of) square-integrable functions will also play a central role in Section \ref{sect:approximation}. Finally, we denote the C$^*$-algebra of (continuous) \textit{adjointable} operators on the Hilbert $\mathscr{A}$-module $E_n$ by $\mathcal{L}_\mathscr{A}(E_n)$ \cite[p.~9]{lance}, and its usual operator C$^*$-norm by $\|\, \cdot \,\|$.

We will now define Rieffel's pseudodifferential operators (for more details, see the Appendix \ref{sect:rieffel}). Given any skew-symmetric linear transformation $J$ on $\mathbb{R}^n$ and $f \in \mathcal{B}^\mathscr{A}(\mathbb{R}^n)$, the linear operator given by the iterated integral
\begin{equation} \label{rieffelop}
L_f(g)(x) := \int_{\mathbb{R}^n} \left( \int_{\mathbb{R}^n} f(x + J\xi) \, g(x + y) \, e^{2 \pi i \langle \xi, y \rangle} \, dy \right) d\xi, \qquad g \in \mathcal{S}^\mathscr{A}(\mathbb{R}^n), \, x \in \mathbb{R}^n,
\end{equation}
maps $\mathcal{S}^\mathscr{A}(\mathbb{R}^n)$ into $\mathcal{S}^\mathscr{A}(\mathbb{R}^n)$ \cite[Proposition 3.3, p.~25]{rieffel}, satisfies $\langle L_f(g), h \rangle_{E_n} = \langle g, L_{f^*}(h) \rangle_{E_n}$, for all $g, h \in \mathcal{S}^\mathscr{A}(\mathbb{R}^n)$, \cite[Proposition 4.2, p.~30]{rieffel} and extends to a bounded operator on the Hilbert $\mathscr{A}$-module $E_n$ \cite[Theorem 4.6 \& Corollary 4.7, p.~34]{rieffel}. By the continuity of the $\mathscr{A}$-valued inner product we see that this extension, also denoted by $L_f$, is an adjointable operator on $E_n$ satisfying $(L_f)^* = L_{f^*}$.

Given $f, g \in \mathcal{B}^\mathscr{A}(\mathbb{R}^n)$, we have the identity $L_f L_g = L_{f \times_J g}$, where $f \times_J g \in \mathcal{B}^\mathscr{A}(\mathbb{R}^n)$ is defined via \textit{Rieffel's deformed product} \cite[p.~23]{rieffel}
\begin{equation} \label{rieffelprod}
(f \times_J g)(x) := \int_{\mathbb{R}^n} \int_{\mathbb{R}^n} f(x + J\xi) \, g(x + y) \, e^{2 \pi i \langle \xi, y \rangle} \, dy \, d\xi, \qquad x \in \mathbb{R}^n.
\end{equation}
We shall refer to $\left\{ L_f: f \in \mathcal{B}^\mathscr{A}(\mathbb{R}^n) \right\}$ as \textit{Rieffel's deformed algebra}. The integral signs, above, do not denote true integrals (not even iterated integrals), but rather \textit{oscillatory integrals} (see \cite[Chapter 1]{rieffel} \cite[pp.~66--69]{cordes}). If $f \in \mathcal{B}^\mathscr{A}(\mathbb{R}^n)$ and $g \in \mathcal{S}^\mathscr{A}(\mathbb{R}^n)$, then Equation \eqref{rieffelprod} coincides with \eqref{rieffelop}, so that $L_f(g) = f \times_J g$. Moreover, the operator of right multiplication by $g \in \mathcal{B}^\mathscr{A}(\mathbb{R}^n)$, defined by $R_g(f) := f \times_J g$, $f \in \mathcal{S}^\mathscr{A}(\mathbb{R}^n)$, is also (the restriction of) an adjointable operator on $E_n$.

Let $C^\infty(\mathbb{R}^{2n},\mathscr{A})$ be the space of smooth $\mathscr{A}$-valued functions on $\mathbb{R}^{2n}$. We call a linear operator $A \colon \mathcal{S}^\mathscr{A}(\mathbb{R}^n) \longrightarrow \mathcal{S}^\mathscr{A}(\mathbb{R}^n)$ a \textit{pseudodifferential operator} with symbol $a \in C^\infty(\mathbb{R}^{2n},\mathscr{A})$, and write $A = \text{Op}(a)$ if, for every $g \in \mathcal{S}^\mathscr{A}(\mathbb{R}^n)$, we have
\begin{equation} \label{eq:pseudos}
A(g)(x) = \frac{1}{(2 \pi)^{n/2}} \int_{\mathbb{R}^n} a(x, \xi) \, \mathcal{F}(g)(\xi) \, e^{i \langle \xi, x \rangle} \, d\xi, \qquad x \in \mathbb{R}^n,
\end{equation}
where $\mathcal{F}$ denotes the Fourier transform (see Equation \eqref{eq:fourier}). Therefore, the operator $L_f$ is a pseudodifferential operator with symbol $(x, \xi) \longmapsto f(x - J\xi / (2 \pi))$ (see Equation \eqref{L1}).

In the scalar case $\mathscr{A} = \mathbb{C}$, when $E_n$ is the usual Hilbert space $L^2(\mathbb{R}^n)$, H.O. Cordes proved \cite{C} \cite[Chapter 8]{cordes} that a bounded operator $A$ on $L^2(\mathbb{R}^n)$ is a smooth vector for the canonical action of the $(2n + 1)$-dimensional Heisenberg group by conjugation if, and only if, $A = \text{Op}(a)$ for some $a \in \mathcal{B}(\mathbb{R}^{2n})$. Motivated by Cordes' ``lovely characterization'', Rieffel conjectured, in the last paragraph of \cite[Chapter 4]{rieffel}, that an adjointable operator $A \in \mathcal{L}_\mathscr{A}(E_n)$ is smooth under the action $\text{Ad }\mathcal{U}$ of the Heisenberg group (see Equation \eqref{eq:heisenbergsmooth}) \textit{and commutes with every $R_g$}, $g \in \mathcal{B}^\mathscr{A}(\mathbb{R}^n)$ if, and only if, $A = L_f$ for some $f \in \mathcal{B}^\mathscr{A}(\mathbb{R}^n)$. That each $L_f$ commutes with every $R_g$ follows immediately from the associative property of Rieffel's product $\times_J$, and the fact that each $L_f$, $f \in \mathcal{B}^\mathscr{A}(\mathbb{R}^n)$, is smooth under the Heisenberg action can be proved using a generalized version of the Calder\'on-Vaillancourt inequality (see \cite[Subsection ``The algebra $\mathcal{B}_J^\mathscr{A}(\mathbb{R}^n)$'']{cabralforgermelo}). Therefore, in the present paper we are mainly interested in investigating the converse part of Rieffel's conjecture \cite[Chapter 4, p.~39]{rieffel}: \\

\begin{enumerate}

\item[] ``... and then let the Heisenberg group act on the whole algebra of bounded operators, $B$, by conjugation. Then the operators which one obtains from $F$'s in $\mathcal{B}$ [via the Kohn-Nirenberg representation, in the scalar case] are exactly the smooth vectors for this action of the Heisenberg group on $B$ by conjugation. Presumably a similar theorem holds in our present context, though the fact that we are using the left regular representation rather than an irreducible representation means that one must replace $B$ by the algebra of operators which commute with the right regular representation...'' \\

\end{enumerate}

The results obtained in \cite{merklen} \cite{melomerklen2} and \cite{melomerklen3} (see also \cite{melomerklen1}) imply that Rieffel's conjecture is true if $\mathscr{A}$ is a matrix algebra $M_k(\mathcal{C})$, for a commutative separable unital C$^*$-algebra $\mathcal{C}$. The main goal of this paper is to prove Rieffel's conjecture for an arbitrary unital C$^*$-algebra $\mathscr{A}$.

The partial results obtained in \cite{melomerklen1} \cite{merklen} \cite{melomerklen2} \cite{melomerklen3} all depend on an interplay between Rieffel's conjecture and generalizations of Cordes' characterization. Here we take a different approach.

We begin by showing in Section \ref{sect:invariant} that $\mathcal{S}^\mathscr{A}(\mathbb{R}^n)$, which by definition is a dense subspace of $E_n$, is left invariant by the operators in the algebra $C^\infty(\text{Ad }\mathcal{U})$ of smooth vectors for the canonical action of the $(2n + 1)$-dimensional Heisenberg group by conjugation (the elements of $C^\infty(\text{Ad }\mathcal{U})$ will be referred to as the \textit{Heisenberg smooth operators} -- see Equation \eqref{eq:heisenbergsmooth} and Proposition \ref{prop:mainthm1}). General results from Lie group representation theory will be applied in order to obtain this conclusion, which will be an important step for the arguments in Section \ref{sect:conjecture}. In the process, we prove in Proposition \ref{prop:s_equals_cinfty} that $\mathcal{S}^\mathscr{A}(\mathbb{R}^n)$ coincides with the subspace of smooth vectors for the canonical unitary representation of the $(2n + 1)$-dimensional Heisenberg group (see Equation \eqref{eq:unitaryrep}), generalizing a result which is well-known in the scalar case $\mathscr{A} = \mathbb{C}$.

In Section \ref{sect:approximation}, we need the hypothesis of $\mathscr{A}$ being unital in order to use the definition and the properties of a certain ``symbol map'' $S \colon C^\infty(\text{Ad }\mathcal{U}) \longrightarrow \mathcal{B}^\mathscr{A}(\mathbb{R}^{2n})$, introduced in \cite[Equation (12)]{melomerklen2}, which adapts a construction given by Cordes. More precisely, for every $A \in C^\infty(\text{Ad }\mathcal{U})$, we show in Proposition \ref{prop:aprox} that $S(A)$ can be pointwise approximated by a sequence $(S(A \circ L_{\tilde{e}_m}))_{m \in \mathbb{N}}$ of functions in $\mathcal{B}^\mathscr{A}(\mathbb{R}^{2n})$, where $(\tilde{e}_m)_{m \in \mathbb{N}}$ is a specific sequence of functions in $\mathcal{S}^\mathscr{A}(\mathbb{R}^n)$. A careful interplay between the spaces $E_n$, $L^2(\mathbb{R}^n, \mathscr{A})$ and $L^2(\mathbb{R}^n) \cdot 1_\mathscr{A}$ (and their corresponding topologies) will be required in the proof. Moreover, once certain key estimates are established, the strategy will be to ``tensor product'' these estimates and go from dimension $n$ to $2n$, so that uniform bounds on the norms of certain tensor product operators may be obtained.

Finally, Section \ref{sect:conjecture} will be devoted to the proof of Theorem A, whose statement will be given next. Denote by $R_n$ the algebra of the right multiplication operators $R_g$ acting on $E_n$, with $g \in \mathcal{S}^\mathscr{A}(\mathbb{R}^n)$, and let $R_n'$ be its commutant inside the C$^*$-algebra $\mathcal{L}_\mathscr{A}(E_n)$, i.e.,
\begin{equation*}
R_n' := \left\{ A \in \mathcal{L}_\mathscr{A}(E_n): A \circ R_g = R_g \circ A, \, g \in \mathcal{S}^\mathscr{A}(\mathbb{R}^n) \right\}.
\end{equation*}
Equation \eqref{eq:left_inverse} shows that the map $\mathcal{R} \circ S|_{C^\infty(\text{Ad }\mathcal{U}) \cap R_n'}$ is a left inverse for $L \colon \mathcal{B}^\mathscr{A}(\mathbb{R}^n) \longrightarrow C^\infty(\text{Ad }\mathcal{U})$, $f \longmapsto L_f$. We will show in Theorem A that $\mathcal{R} \circ S|_{C^\infty(\text{Ad }\mathcal{U}) \cap R_n'}$ is injective. More precisely, we will establish the equality
\begin{equation*}
L \circ \mathcal{R} \circ S|_{C^\infty(\text{Ad }\mathcal{U}) \cap R_n'} = Id_{C^\infty(\text{Ad }\mathcal{U}) \cap R_n'},
\end{equation*}
where $Id_{C^\infty(\text{Ad }\mathcal{U}) \cap R_n'}$ denotes the identity operator on $C^\infty(\text{Ad }\mathcal{U}) \cap R_n'$:

\begin{theorem a} \label{thm:rieffelunital}
Let $\mathscr{A}$ be a unital C$^*$-algebra and let $R_n'$ be the commutant defined above. If $A \in C^\infty(\text{Ad }\mathcal{U}) \cap R_n'$, then $A = L_{(\mathcal{R} \circ S)(A)}$.
\end{theorem a}

The reason we are interested in Theorem A is because it allows us to obtain, as an immediate corollary, a solution to Rieffel's conjecture:

\begin{theorem*}[Rieffel's conjecture for a unital C$^*$-algebra] \label{thm:rieffelunital_conj}
Let $\mathscr{A}$ be a unital C$^*$-algebra. If a Heisenberg smooth operator $A \in \mathcal{L}_\mathscr{A}(E_n)$ commutes with every operator of the form $R_h$, with $h \in \mathcal{B}^\mathscr{A}(\mathbb{R}^n)$, then there exists $f \in \mathcal{B}^\mathscr{A}(\mathbb{R}^n)$ such that $A = L_f$.
\end{theorem*}

\begin{proof}
Let $A \in C^\infty(\text{Ad }\mathcal{U})$. If $A$ commutes with every operator $R_h$, with $h \in \mathcal{B}^\mathscr{A}(\mathbb{R}^n)$, then in particular it commutes with every $R_h$, with $h \in \mathcal{S}^\mathscr{A}(\mathbb{R}^n)$. But, as a consequence of Theorem A, we know that if $A$ commutes with every operator of the form $R_h$, with $h \in \mathcal{S}^\mathscr{A}(\mathbb{R}^n)$, then $A = L_{(\mathcal{R} \circ S)(A)}$. Therefore, since $(\mathcal{R} \circ S)(A)$ belongs to $\mathcal{B}^\mathscr{A}(\mathbb{R}^n)$, the result follows immediately.
\end{proof}

The proof of Theorem A consists essentially of the computation of two limits. For the first one, we make use of Proposition \ref{prop:aprox}, as well as of certain estimates which are obtained in its proof. For the second one, the hypothesis that $A \in C^\infty(\text{Ad }\mathcal{U})$ commutes with every $R_g$, $g \in \mathcal{S}^\mathscr{A}(\mathbb{R}^n)$, is finally considered: we adapt a useful calculation performed in \cite[Corol\'ario 2.7]{merklentese} and combine it with the results of Proposition \ref{prop:mainthm1} and Lemma \ref{lem:approxidentity}. Finally, a careful combination of the conclusions summarized in Equations \eqref{eq:rieffel1} and \eqref{eq:rieffel2} will establish Theorem A.

\section{A (reasonable) dense invariant subspace for the Heisenberg smooth operators} \label{sect:invariant}

We begin this section by recalling a few basic concepts and fixing the notation which will be employed throughout the paper. For more information on Lie group representations, see \cite{poulsen}.

If $G$ is a (finite-dimensional) Lie group with unit $e$ and $(\mathcal{X}, \|\, \cdot \,\|)$ is a Banach space, we say that a family $\left\{ V_g \right\}_{g \in G}$ of continuous (everywhere defined) linear operators on $\mathcal{X}$ is a \textit{strongly continuous representation of $G$ on $\mathcal{X}$} if
\begin{equation*}
V_e = I, \, V_{gh} = V_g V_h, \quad \text{and} \quad \lim_{h' \rightarrow h} V_{h'}x = V_hx, \qquad x \in \mathcal{X}, \, g, h \in G,
\end{equation*}
with $I$ denoting the identity operator on $\mathcal{X}$. If $G = \mathbb{R}$, then we call $\left\{ V_t \right\}_{t \in \mathbb{R}}$ a \textit{strongly continuous one-parameter group}. In this case, the subspace of vectors $x \in \mathcal{X}$ such that $\lim_{t \rightarrow 0} t^{-1}(V_t x - x)$ exists in $\mathcal{X}$ defines the domain $\text{Dom }T$ of a linear operator $T$ on $\mathcal{X}$, $T(x) := \lim_{t \rightarrow 0} t^{-1}(V_t x - x)$, which is called the \textit{infinitesimal generator} of $\left\{ V_t \right\}_{t \in \mathbb{R}}$. Moreover, if $G$ is any Lie group and $\left\{ V_g \right\}_{g \in G}$ is a strongly continuous representation of $G$ on $\mathcal{X}$, then each $X$ in its Lie algebra $\mathfrak{g}$ gives rise to a one-parameter group $t \longmapsto V_{\exp tX}$ on $\mathcal{X}$ ($\exp$ denotes the exponential map of the Lie group $G$), and its infinitesimal generator is denoted by $dV(X)$. Finally, a vector $x \in \mathcal{X}$ is called a \textit{smooth vector for $V$} if the map $G \ni g \longmapsto V_g x$ is of class $C^\infty$ (in other words, if the map $G \ni g \longmapsto V_g x$ has continuous partial derivatives of all orders on an arbitrary chart). The subspace of smooth vectors for $V$ will be denoted by $C^\infty(V)$. It turns out that $C^\infty(V)$ is a dense subspace of $\mathcal{X}$ which is invariant by the operators $\left\{ dV(X) \right\}_{X \in \mathfrak{g}}$, and the map $\partial V \colon X \longmapsto \partial V(X) := dV(X)|_{C^\infty(V)}$ is a Lie algebra representation on $C^\infty(V)$ which extends to a representation of (the complexification of) the universal enveloping algebra of $\mathfrak{g}$. Given a basis $\mathcal{B} := (X_k)_{1 \leq k \leq d}$ for $\mathfrak{g}$ we may equip $C^\infty(V)$ with a Fr\'echet space topology defined by the family
\begin{equation} \label{smoothfrechet}
\left\{ \rho_n: n \in \mathbb{N} \right\}
\end{equation}
of norms, with $\rho_0(x) := \|x\|$, \, $dV(X_0) := I$ and
\begin{equation*}
\rho_n(x) := \max \left\{ \|dV(X_{i_1}) \cdots dV(X_{i_n})x\|: 0 \leq i_j \leq d \right\}, \quad n \geq 1;
\end{equation*}
note, however, that this topology does not depend upon the fixed basis $\mathcal{B}$.

We recall that the Heisenberg group $H_{2n + 1}(\mathbb{R}) = \left\{ (\texttt{a}, \texttt{b}, c): \texttt{a}, \texttt{b} \in \mathbb{R}^n, \, c \in \mathbb{R} \right\}$ (with multiplication given by $(\texttt{a}, \texttt{b}, c) \cdot (\texttt{a}', \texttt{b}', c') := (\texttt{a} + \texttt{a}', \texttt{b} + \texttt{b}', c + c' - \langle \texttt{b}', \texttt{a} \rangle)$) embeds as a subgroup of the group of invertible matrices $GL_{n + 2}(\mathbb{R})$ via the map
$\iota \colon (\texttt{a},\texttt{b},c)\ \longmapsto\ \begin{bmatrix}
1 & \texttt{a}^T & c\\
0 & I_n & -\texttt{b}\\
0 & 0 & 1
\end{bmatrix}$, with $I_n$ denoting the identity matrix of $M_n(\mathbb{R})$. A \textit{unitary operator} on $E_n$ is, by definition, an adjointable operator $u \in \mathcal{L}_\mathscr{A}(E_n)$ satisfying $u^* u = u u^* = I$ \cite[p.~24]{lance} (some might denote ``unitary'' instead of unitary, only to stress that $E_n$ is not a Hilbert space). One may then define a strongly continuous unitary representation $\mathcal{U}$ of $H_{2n + 1}(\mathbb{R})$ on the Hilbert $\mathscr{A}$-module $E_n$: first, we define it on $\mathcal{S}^\mathscr{A}(\mathbb{R}^n)$ by
\begin{equation} \label{eq:unitaryrep}
\mathcal{U}_{\texttt{a}, \texttt{b}, c}(f)(x) := e^{ic} e^{i \langle \texttt{b}, x \rangle} f(x - \texttt{a}), \qquad f \in \mathcal{S}^\mathscr{A}(\mathbb{R}^n), \, x \in \mathbb{R}^n,
\end{equation}
and then extend it by continuity to all of $E_n$. Conjugating with the representation $\mathcal{U}$ gives rise to a (not everywhere strongly continuous) representation of the Heisenberg group $H_{2n + 1}(\mathbb{R})$ on the C$^*$-algebra of adjointable operators $\mathcal{L}_\mathscr{A}(E_n)$ defined by
\begin{equation} \label{eq:heisenbergsmooth}
(\text{Ad }\mathcal{U})_{\texttt{a}, \texttt{b}, c}(\, \cdot \,) := \mathcal{U}_{\texttt{a}, \texttt{b}, c} \, (\, \cdot \,) \, (\mathcal{U}_{\texttt{a}, \texttt{b}, c})^{-1}.
\end{equation}
Note that $(\text{Ad }\mathcal{U})_{\texttt{a}, \texttt{b}, c}$ does not depend on the real variable $c$, so we will simply write $(\text{Ad }\mathcal{U})_{\texttt{a}, \texttt{b}}$. We will restrict our attention to the $\text{Ad }\mathcal{U}$-invariant C$^*$-subalgebra $C(\text{Ad }\mathcal{U})$ of elements $A \in \mathcal{L}_\mathscr{A}(E_n)$ for which $\text{Ad }\mathcal{U}$ is strongly continuous or, in other words, the C$^*$-subalgebra $C(\text{Ad }\mathcal{U})$ of elements $A \in \mathcal{L}_\mathscr{A}(E_n)$ satisfying $\lim_{(\texttt{a}, \texttt{b}) \rightarrow (0_n, 0_n)} (\text{Ad }\mathcal{U})_{\texttt{a}, \texttt{b}}(A) = A$ ($0_n$ denotes the zero vector in $\mathbb{R}^n$). The corresponding smooth vectors for this (slightly modified) representation will be denoted by $C^\infty(\text{Ad }\mathcal{U})$, and its elements will be referred to as \textit{Heisenberg smooth operators}.

Let $(f_k)_{1 \leq k \leq n}$ be the canonical basis of $\mathbb{R}^n$ and let $(X_k)_{1 \leq k \leq 2n + 1}$ be the canonical basis for the Lie algebra of $H_{2n + 1}(\mathbb{R})$, in which $X_k$ is defined as
\begin{equation*}
(f_k, 0_n, 0), \quad \text{if }1 \leq k \leq n, \qquad (0_n, f_{k - n}, 0), \quad \text{if }n + 1 \leq k \leq 2n,
\end{equation*}
or
\begin{equation*}
(0_n, 0_n, 1), \quad \text{if }k = 2n + 1.
\end{equation*}
Then for terminological convenience the restrictions of the infinitesimal generators $d(\text{Ad }\mathcal{U})(X_k)$ to $C^\infty(\text{Ad }\mathcal{U})$, when $1 \leq k \leq 2n$, will be denoted simply by $\partial_k$. Since $C^\infty(\text{Ad }\mathcal{U})$ is a core for each infinitesimal generator, we have $d(\text{Ad }\mathcal{U})(X_k) = \overline{\partial_k}$, for all $1 \leq k \leq 2n$, where $\overline{(\, \cdot \,)}$, as usual, denotes the closure of the operator under consideration.

Before proceeding to the main results of this section, we make a few comments about the Fourier transform $\mathcal{F}$, a Fr\'echet space automorphism of $\mathcal{S}^\mathscr{A}(\mathbb{R}^n)$ defined by
\begin{equation} \label{eq:fourier}
\mathcal{F}(g)(\xi) := \frac{1}{(2 \pi)^{n / 2}} \int_{\mathbb{R}^n} e^{-i \langle s, \xi \rangle} \, g(s) \, ds, \qquad g \in \mathcal{S}^\mathscr{A}(\mathbb{R}^n), \, \xi \in \mathbb{R}^n,
\end{equation}
with inverse given by $\mathcal{F}^{-1}(g)(x) = \mathcal{F}(g)(-x)$ (see \cite[Section 2.4, p.~105]{analysis-bochner}). It extends to an isometry on $E_n$. In fact, for every $f, g \in \mathcal{S}^\mathscr{A}(\mathbb{R}^n)$, we have
\begin{align} \label{plancherel}
(2 \pi)^{n/2} \langle \mathcal{F}(f), g \rangle & = \int \left( \int e^{-i \langle x, y \rangle} f(y) \, dy \right)^* g(x) \, dx \\ & = \int f(y)^* \left( \int e^{i \langle x, y \rangle} g(x) \, dx \right) dy = (2 \pi)^{n/2} \langle f, \mathcal{F}^{-1}(g) \rangle, \nonumber
\end{align}
so substituting $g = \mathcal{F}(f)$ in the above equality shows that $\mathcal{F}$ is an isometry onto $\mathcal{S}^\mathscr{A}(\mathbb{R}^n)$ (with respect to the norm $\|\, \cdot \,\|_2$). Therefore, since $\mathcal{S}^\mathscr{A}(\mathbb{R}^n)$ is dense in $E_n$, we see that $\mathcal{F}$ extends to an isometry onto $E_n$. Moreover, by the continuity of the $\mathscr{A}$-valued inner product, we obtain $\langle \mathcal{F}(f), g \rangle = \langle f, \mathcal{F}^{-1}(g) \rangle$, for $f, g \in E_n$. This shows that $\mathcal{F}$ is an adjointable operator on $E_n$, with $\mathcal{F}^* = \mathcal{F}^{-1}$, i.e., a generalized form of Plancherel's theorem holds for the Hilbert $\mathscr{A}$-module $E_n$.

Now, we focus on the main objective of this section, which is to prove that every Heisenberg smooth operator leaves $\mathcal{S}^\mathscr{A}(\mathbb{R}^n)$ invariant. We begin with an auxiliary lemma, which proves that each such operator maps $\mathcal{S}^\mathscr{A}(\mathbb{R}^n)$ into the space of smooth vectors for the unitary representation $\mathcal{U}$.

\begin{lemma} \label{lem:s_to_cinfty}
Every Heisenberg smooth operator $A \in \mathcal{L}_\mathscr{A}(E_n)$ maps $\mathcal{S}^\mathscr{A}(\mathbb{R}^n)$ into $C^\infty(\mathcal{U})$.
\end{lemma}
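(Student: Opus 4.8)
The plan is to exploit the fact that a Heisenberg smooth operator $A$ intertwines, up to commutator corrections, with the infinitesimal generators of the unitary representation $\mathcal{U}$, and to show that $Af$ satisfies the quantitative smoothness estimates that characterize $C^\infty(\mathcal{U})$. More precisely, write $P_k := d\mathcal{U}(0,f_k)$ (the ``momentum'' generators, acting as multiplication by $ix_k$ up to a factor) and $Q_k := d\mathcal{U}(f_k,0)$ (the ``position'' generators, acting as $-\partial_k$). By the very definition of $\mathrm{Ad}\,\mathcal{U}$, one has the formal relations $\mathcal{U}_{\texttt{a},\texttt{b}}\, A\, \mathcal{U}_{\texttt{a},\texttt{b}}^{-1} = (\mathrm{Ad}\,\mathcal{U})(\texttt{a},\texttt{b})(A)$, and differentiating in the group parameters gives $[P_k, A] = \partial_{n+k} A$ and $[Q_k, A] = \partial_k A$ on the appropriate domains, where $\partial_k A, \partial_{n+k} A \in \mathcal{L}_\mathcal{C}(E_n)$ because $A \in C^\infty(\mathrm{Ad}\,\mathcal{U})$.

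The key step is then the following: for $f \in \mathcal{S}^\mathcal{C}(\mathbb{R}^n)$ I want to show that $Af$ lies in the domain of every monomial in the $P_k$'s and $Q_k$'s, with the norms $\|P^\alpha Q^\beta (Af)\|_2$ controlled. One proceeds by induction on the total order $|\alpha| + |\beta|$ of the monomial, pushing the generators past $A$ one at a time using the commutator identities above: schematically, $P_k (Af) = A (P_k f) + (\partial_{n+k}A) f$, and similarly for $Q_k$. Since $f \in \mathcal{S}^\mathcal{C}(\mathbb{R}^n) = C^\infty(\mathcal{U})$ (in the scalar-valued case this is classical; in general it will be Theorem~\ref{thm:s_equals_cinfty}, but for this lemma one only needs the easy inclusion $\mathcal{S}^\mathcal{C}(\mathbb{R}^n) \subseteq C^\infty(\mathcal{U})$, which is immediate from \eqref{eq:unitaryrep}), the term $A(P_k f)$ is again of the form $A$ applied to a Schwartz function, and $(\partial_{n+k}A)f$ is a bounded operator applied to a Schwartz function — hence both are in $E_n$. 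Iterating, every $P^\alpha Q^\beta (Af)$ is a finite sum of terms of the shape $(\partial^\gamma A)(P^{\alpha'}Q^{\beta'} f)$ with $|\alpha'|+|\beta'| \le |\alpha|+|\beta|$ and $\partial^\gamma A \in \mathcal{L}_\mathcal{C}(E_n)$; each such term lies in $E_n$ with norm at most $\|\partial^\gamma A\|\, \|P^{\alpha'}Q^{\beta'} f\|_2$. This yields that $Af$ is a $C^\infty$-vector for $\mathcal{U}$: by the standard characterization of smooth vectors via iterated action of the generators (the Fréchet structure \eqref{smoothfrechet} applied to $\mathcal{U}$, together with the fact that a vector in the intersection of all domains of monomials in a basis of generators, on which those monomials act boundedly in a suitable sense, is smooth), we conclude $Af \in C^\infty(\mathcal{U})$.

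The point requiring genuine care — and what I expect to be the main obstacle — is the justification of the commutator identities $[P_k,A] = \partial_{n+k}A$ and $[Q_k,A]=\partial_k A$ as identities between (possibly unbounded) operators on $\mathcal{S}^\mathcal{C}(\mathbb{R}^n)$, and more precisely the assertion that $A$ maps $\mathrm{Dom}\,P_k$ (restricted to Schwartz functions) into $\mathrm{Dom}\,P_k$ with the stated commutation relation. The clean way to do this is to fix $f \in \mathcal{S}^\mathcal{C}(\mathbb{R}^n)$, consider the $E_n$-valued map $(\texttt{a},\texttt{b}) \mapsto \mathcal{U}_{\texttt{a},\texttt{b}}\, A\, \mathcal{U}_{\texttt{a},\texttt{b}}^{-1} f = (\mathrm{Ad}\,\mathcal{U})(\texttt{a},\texttt{b})(A)\, f$, observe it is smooth in $(\texttt{a},\texttt{b})$ because $A \in C^\infty(\mathrm{Ad}\,\mathcal{U})$ and $\mathcal{U}$ is strongly continuous, differentiate at the origin, and use that $\mathcal{U}_{\texttt{a},\texttt{b}}^{-1}f$ is itself smooth in $(\texttt{a},\texttt{b})$ valued in $\mathcal{S}^\mathcal{C}(\mathbb{R}^n) \subseteq E_n$ (hence $A$ applied to it can be differentiated by the product rule, since $A$ is a bounded operator on $E_n$). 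Matching the two expressions for the derivative of $\mathcal{U}_{\texttt{a},\texttt{b}}(A\,\mathcal{U}_{\texttt{a},\texttt{b}}^{-1}f)$ gives, after rearranging, exactly $P_k(Af) - A(P_k f) = (\partial_{n+k}A)f$ (and similarly for $Q_k$), where one must check that $P_k(Af)$ makes sense, i.e. that $Af \in \mathrm{Dom}\,\overline{P_k}$ — but this falls out of the same differentiation argument since $\texttt{b} \mapsto \mathcal{U}_{0,\texttt{b}}(Af)$ is differentiable at $\texttt{b}=0$. A subtlety worth flagging is that all of this should be carried out with the weak (i.e. $\mathcal{C}$-valued inner product) differentiability and then upgraded to norm differentiability, or alternatively one works directly with norm limits using that the relevant maps into $E_n$ are $C^1$; either way the generalized Plancherel theorem and strong continuity of $\mathcal{U}$ on $E_n$ established above are the tools that make the limits legitimate in the Hilbert $\mathcal{C}$-module setting.
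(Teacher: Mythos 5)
Your proposal is correct and follows essentially the same route as the paper: the identity $P_k(Af) = A(P_kf) + (\partial_{n+k}A)f$ (and its analogue for $Q_k$), justified by the product-rule/decomposition argument for $(\texttt{a},\texttt{b}) \mapsto \mathcal{U}_{\texttt{a},\texttt{b}}A\mathcal{U}_{\texttt{a},\texttt{b}}^{-1}f$ using that $A$ is Heisenberg smooth and that difference quotients of Schwartz functions converge in $\|\cdot\|_2$, is exactly the computation in the paper's proof, and the iteration plus the characterization of $C^\infty(\mathcal{U})$ as the intersection of the domains of all monomials (powers) of the generators, via Poulsen, is also the paper's concluding step.
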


\begin{proof}
Let $(f_k)_{1 \leq k \leq n}$ be the canonical basis of $\mathbb{R}^n$ and $f \in \mathcal{S}^\mathscr{A}(\mathbb{R}^n)$. Moreover, let $(X_k)_{1 \leq k \leq 2n + 1}$ be the canonical basis for the Lie algebra of $H_{2n + 1}(\mathbb{R})$, as defined just before Equation \eqref{eq:fourier}.

For each fixed $v \in \mathbb{R}^n$, let $T_v$ denote the continuous extension to $E_n$ of the operator on $\mathcal{S}^\mathscr{A}(\mathbb{R}^n)$ defined by $(T_v \, g)(\, \cdot \,) := g(\, \cdot \, - v)$. As a first step, let us check that the limit
\begin{equation} \label{diff}
\lim_{h \rightarrow 0} \frac{(T_{-h f_k} Af) - Af}{h}
\end{equation}
exists in $E_n$, for all $1 \leq k \leq n$; to simplify notation let us write $h$ for $h f_k$. Write the decomposition
\begin{equation*}
\frac{T_{-h}(Af) - Af}{h} = \underbrace{\frac{(T_{-h} A T_h)f - Af}{h}}_{\text{(I)}} + \underbrace{\frac{(T_{-h} A T_h)(T_{-h} f) - (T_{-h} A T_h)f}{h}}_{\text{(II)}},
\end{equation*}
and let us calculate the limits of the expressions (I) and (II) when $h \rightarrow 0$. We begin with (I), which is simpler: by the definition of $\partial_k$ we have
\begin{equation*}
\lim_{h \rightarrow 0} \left\| \frac{(T_{-h} A T_h)f - Af}{h} - (- \partial_k A)f \right\|_2 \leq \lim_{h \rightarrow 0} \left\| \frac{T_{-h} A T_h - A}{h} - (- \partial_k A) \right\| \|f\|_2 = 0,
\end{equation*}
since $A$ is a Heisenberg smooth operator and $T_{-h} A T_h = (\text{Ad }\mathcal{U})_{-h f_k, 0}(A)$. To analyze (II), we first write the decomposition
\begin{align*}
\frac{(T_{-h} A T_h)(T_{-h} f) - (T_{-h} A T_h)f}{h} & = \underbrace{ \left[ \frac{(T_{-h} A T_h)(T_{-h} f) - (T_{-h} A T_h) f}{h} - (T_{-h} A T_h) \frac{\partial}{\partial x_k} f \right] }_{\text{(III)}} \\ & + \underbrace{\left[ (T_{-h} A T_h) \left( \frac{\partial}{\partial x_k} f \right) - A \left( \frac{\partial}{\partial x_k} f \right) \right]}_{\text{(IV)}} +  A \left( \frac{\partial}{\partial x_k} f \right).
\end{align*}
The expression corresponding to (IV) goes to 0, as $h \rightarrow 0$, because $A$ is a Heisenberg smooth operator so, in particular, $\lim_{h \rightarrow 0} \|T_{-h} A T_h - A\| = 0$. To see that the expression corresponding to (III) also goes to 0, as $h \rightarrow 0$, first note that the sum
\begin{equation*}
\frac{T_{-h} - I}{h}f - \frac{\partial}{\partial x_k} f
\end{equation*}
converges to 0 in $(\mathcal{S}^\mathscr{A}(\mathbb{R}^n), \|\, \cdot \,\|_2)$ (because it does in the natural Fr\'echet topology of $\mathcal{S}^\mathscr{A}(\mathbb{R}^n)$), so
\begin{equation*}
\lim_{h \rightarrow 0} \left\| \frac{(T_{-h} A T_h)(T_{-h} f) - (T_{-h} A T_h)f}{h} - (T_{-h} A T_h) \frac{\partial}{\partial x_k} f \right\|_2 \leq \|A\| \lim_{h \rightarrow 0} \left\| \frac{T_{-h} - I}{h} f - \frac{\partial}{\partial x_k} f \right\|_2 = 0.
\end{equation*}
Therefore, we get
\begin{equation*}
\lim_{h \rightarrow 0} \frac{(T_{-h} A T_h)(T_{-h} f) - (T_{-h} A T_h) f}{h} = A \left( \frac{\partial}{\partial x_k} f \right).
\end{equation*}
Combining the analyses of (I) and (II) proves that the limit \eqref{diff} exists in $E_n$ and satisfies
\begin{equation}
\lim_{h \rightarrow 0} \frac{(T_{-h f_k} Af) - Af}{h} = (- \partial_k A) f + A \left( \frac{\partial}{\partial x_k} f \right). \end{equation}
In particular, this shows that $Af$ belongs to $\text{Dom }d \mathcal{U}(X_k)$, for all $1 \leq k \leq n$.

Now, for each fixed $v \in \mathbb{R}^n$, denote by $M_v$ the continuous extension to $E_n$ of the operator on $\mathcal{S}^\mathscr{A}(\mathbb{R}^n)$ defined by $(M_v \, g)(\, \cdot \,) := e^{i \langle v, \, \cdot \, \rangle} \, g(\, \cdot \,)$. Then we prove with an analogous reasoning that the limit $\lim_{h \rightarrow 0} \frac{(M_{h f_k} Af) - Af}{h}$ exists in $E_n$ and that
\begin{equation}
\lim_{h \rightarrow 0} \frac{(M_{h f_k} Af) - Af}{h} = (\partial_{n + k} A) f + A (i x_k \, f),
\end{equation}
where $i x_k$ denotes the multiplication operator on $\mathcal{S}^\mathscr{A}(\mathbb{R}^n)$ given by $(i x_k \, g)(x) := i x_k \, g(x)$. This shows that $Af$ belongs to $\text{Dom }d \mathcal{U}(X_{n + k})$, for all $f \in \mathcal{S}^\mathscr{A}(\mathbb{R}^n)$ and $1 \leq k \leq n$. 

Clearly, $Af$ also belongs to $\text{Dom }d \mathcal{U}(X_{2n + 1})$, so we conclude that $Af \in \cap_{j = 1}^{2n + 1} \text{Dom }d \mathcal{U}(X_j)$. Since the Heisenberg smooth operator $A \in \mathcal{L}_\mathscr{A}(E_n)$ and the function $f \in \mathcal{S}^\mathscr{A}(\mathbb{R}^n)$ were arbitrary, an inductive procedure on the above calculations shows that every smooth vector $A$ sends $\mathcal{S}^\mathscr{A}(\mathbb{R}^n)$ into
\begin{equation*}
\bigcap_{j = 1}^{2n + 1} \bigcap_{m \in \mathbb{N}} \text{Dom }(d \mathcal{U}(X_j))^m = C^\infty(\mathcal{U})
\end{equation*}
(for this last equality see, for example, \cite[p.~90]{poulsen}).
\end{proof}

It is clear that the inclusion $\mathcal{S}^\mathscr{A}(\mathbb{R}^n) \subseteq C^\infty(\mathcal{U})$ holds. We will now prove that $C^\infty(\mathcal{U})$ actually coincides with the function space $\mathcal{S}^\mathscr{A}(\mathbb{R}^n)$.

\begin{proposition} \label{prop:s_equals_cinfty}
The space of smooth vectors $C^\infty(\mathcal{U})$ for the unitary representation $\mathcal{U}$ coincides with $\mathcal{S}^\mathscr{A}(\mathbb{R}^n)$.
\end{proposition}

\begin{proof}
We begin the proof by showing that the families $\left\{ p_{\alpha, \beta} \right\}_{\alpha, \beta \in \mathbb{N}^n}$ and $\left\{ q_{N_1, N_2} \right\}_{N_1, N_2 \in \mathbb{N}}$ of seminorms defined by
\begin{align} \label{eq:seminorms}
p_{\alpha, \beta}(f) & := \sup_{x \in \mathbb{R}^n} \|x^\alpha \, \partial_x^\beta f(x)\|_\mathscr{A}, \qquad f \in \mathcal{S}^\mathscr{A}(\mathbb{R}^n), \, \alpha, \beta \in \mathbb{N}^n, \\
q_{N_1, N_2}(f) & := \left( \sum_{|\alpha| \leq N_1, \, |\beta| \leq N_2} \|x^\alpha \, \partial_x^\beta f\|_2^2 \right)^{1 / 2}, \qquad f \in \mathcal{S}^\mathscr{A}(\mathbb{R}^n), \, N_1, N_2 \in \mathbb{N}, \nonumber
\end{align}
generate equivalent topologies on $\mathcal{S}^\mathscr{A}(\mathbb{R}^n)$. The topology generated by the family $\left\{ p_{\alpha, \beta} \right\}_{\alpha, \beta \in \mathbb{N}^n}$ is finer than the one generated by $\left\{ q_{N_1, N_2} \right\}_{N_1, N_2 \in \mathbb{N}}$, because using the identity
\begin{equation*}
(1 + |x|^2)^m = \sum_{\gamma \in \mathbb{N}^n, \, |\gamma| \leq m} C_{m, \gamma} \, x^{2 \gamma}, \qquad C_{m, \gamma} := \frac{m!}{(m - |\gamma|)! \, |\gamma|!} \cdot \frac{|\gamma|!}{\gamma_1! \ldots \gamma_n!}
\end{equation*}
we see that, for all $m \in \mathbb{N}$, $m > n / 2$, there exists $C_m > 0$ such that
\begin{align} \label{eq:finer}
\|x^\alpha \, \partial_x^\beta f\|_2^2 & \leq \int_{\mathbb{R}^n} \frac{(1 + |x|^2)^m \, \|x^\alpha \, \partial_x^\beta f(x)\|_\mathscr{A}^2}{(1 + |x|^2)^m} \, dx \\ & \leq \sum_{|\gamma| \leq m} \sup_{x \in \mathbb{R}^n} \|x^{\alpha + 2 \gamma} \, \partial_x^\beta f(x) \|_\mathscr{A}^2 \int_{\mathbb{R}^n} \frac{C_{m, \gamma}}{(1 + |x|^2)^m} \, dx \nonumber \\ & \leq C_m \left( \sum_{|\gamma| \leq m} \sup_{x \in \mathbb{R}^n} \|x^{\alpha + 2 \gamma} \, \partial_x^\beta f(x) \|_\mathscr{A} \right)^2 \! \! \!, \nonumber
\end{align}
for every $f \in \mathcal{S}^\mathscr{A}(\mathbb{R}^n)$ and $\alpha, \beta \in \mathbb{N}^n$.

On the other hand, applying Fourier's Inversion Formula combined with the Cauchy-Schwarz inequality for Hilbert C$^*$-modules \cite[Proposition 1.1, p.~3]{lance} and Equation \eqref{plancherel}, we obtain, for each $m \in \mathbb{N}$ satisfying $m > n/2$, a constant $D_m > 0$ such that the following estimate holds:
\begin{align} \label{eq:coarser}
\|x^\alpha \, \partial_x^\beta f(x)\|_\mathscr{A} & = \|(\mathcal{F}^{-1} \circ \mathcal{F})(y^\alpha \, \partial_y^\beta f)(x)\|_\mathscr{A} \\ & = \frac{1}{(2 \pi)^{n / 2}} \left\| \int_{\mathbb{R}^n} \left[ \frac{e^{i \langle \xi, x \rangle}}{(1 + |\xi|^2)^m} \, 1_\mathscr{A} \right] \, (1 + |\xi|^2)^m \, [\mathcal{F}(y^\alpha \, \partial_y^\beta f)](\xi) \, d\xi \right\|_\mathscr{A} \nonumber \\ & \leq \frac{1}{(2 \pi)^{n / 2}} \, \left\| \frac{e^{i \langle \xi, x \rangle}}{(1 + |\xi|^2)^m} \, 1_\mathscr{A} \right\|_2 \, \left\| (1 + |\xi|^2)^m \, [\mathcal{F}(y^\alpha \, \partial_y^\beta f)](\xi) \right\|_2 \nonumber \\ & \stackrel{\eqref{plancherel}}{=} D_m \, \left\| (1 - \Delta_\xi)^m \, [\xi^\alpha \, \partial_\xi^\beta f](\xi) \right\|_2, \qquad x \in \mathbb{R}^n \nonumber
\end{align}
(note that, in order to guarantee that the function $\xi \longmapsto (e^{i \langle \xi, x \rangle} / (1 + |\xi|^2)^m) \, 1_\mathscr{A}$ belongs to $E_n$, we have used the fact that $L^2(\mathbb{R}^n)$ is isometrically embedded in $E_n$; see \cite[Remark D.2, Appendix D]{cabralforgermelo}). Taking the supremum over all $x \in \mathbb{R}^n$ in \eqref{eq:coarser} shows that the topology generated by the family $\left\{ p_{\alpha, \beta} \right\}_{\alpha, \beta \in \mathbb{N}^n}$ is coarser than the one generated by $\left\{ q_{N_1, N_2} \right\}_{N_1, N_2 \in \mathbb{N}}$. This proves our first claim: the topologies generated by the two sets of seminorms in \eqref{eq:seminorms} are equivalent.

Note that $\mathcal{S}^\mathscr{A}(\mathbb{R}^n) \subseteq C^\infty(\mathcal{U})$ is dense in $E_n$ and $\mathcal{U}_{\texttt{a}, \texttt{b}, c}[\mathcal{S}^\mathscr{A}(\mathbb{R}^n)] \subseteq \mathcal{S}^\mathscr{A}(\mathbb{R}^n)$, for all $\texttt{a}, \texttt{b} \in \mathbb{R}^n$ and $c \in \mathbb{R}$. Therefore, it follows from \cite[Theorem 1.3]{poulsen} that $\mathcal{S}^\mathscr{A}(\mathbb{R}^n)$ is dense in $C^\infty(\mathcal{U})$ with respect to the Fr\'echet topology generated by the family \eqref{smoothfrechet} of norms on $C^\infty(\mathcal{U})$ (with $\|\, \cdot \,\|$ and $\left\{ V_g \right\}_{g \in G}$ replaced by $\|\, \cdot \,\|_2$ and $\left\{ \mathcal{U}_g \right\}_{g \in H_{2n + 1}(\mathbb{R})}$, respectively). The topology induced on $\mathcal{S}^\mathscr{A}(\mathbb{R}^n)$ by the above family of norms on $C^\infty(\mathcal{U})$ coincides with the one generated by the family $\left\{ q_{N_1, N_2} \right\}_{N_1, N_2 \in \mathbb{N}}$ of norms. But, in our first claim we have proved, in particular, that equipping $\mathcal{S}^\mathscr{A}(\mathbb{R}^n)$ with the family $\left\{ q_{N_1, N_2} \right\}_{N_1, N_2 \in \mathbb{N}}$ of norms turns it into a Fr\'echet space, because we already know that $\mathcal{S}^\mathscr{A}(\mathbb{R}^n)$, when equipped with the family $\left\{ p_{\alpha, \beta} \right\}_{\alpha, \beta \in \mathbb{N}^n}$ of seminorms, becomes a Fr\'echet space. This shows that $\mathcal{S}^\mathscr{A}(\mathbb{R}^n)$ is a closed and dense subspace of the Fr\'echet space $C^\infty(\mathcal{U})$, which forces the equality $\mathcal{S}^\mathscr{A}(\mathbb{R}^n) = C^\infty(\mathcal{U})$.
\end{proof}

The main theorem of this section is an immediate corollary of Lemma \ref{lem:s_to_cinfty} combined with Proposition \ref{prop:s_equals_cinfty}:

\begin{proposition} \label{prop:mainthm1}
Every Heisenberg smooth operator $A \in \mathcal{L}_\mathscr{A}(E_n)$ maps $\mathcal{S}^\mathscr{A}(\mathbb{R}^n)$ into $\mathcal{S}^\mathscr{A}(\mathbb{R}^n)$.
\end{proposition}

\section{An approximation theorem for Cordes' symbol map} \label{sect:approximation}

Throughout the rest of the manuscript, we will denote $\mathbb{N}^* := \mathbb{N} \backslash \left\{0\right\}$. Also, since we will need to invoke certain results about vector-valued integration from reference \cite{analysis-bochner}, we shall adopt some of its notations and definitions. Let $\mu$ be the Lebesgue measure on $\mathbb{R}^n$. Then we shall say that a function $f \colon \mathbb{R}^n \longrightarrow \mathscr{A}$ is \textit{$\mu$-simple} if $f(x) = \sum_{j = 1}^N 1_{B_j}(x) \, a_j$, for some fixed natural number $N > 0$ and all $x \in \mathbb{R}^n$, with $a_j$ being certain elements of $\mathscr{A}$ and $1_{B_j}$ the indicator functions of Lebesgue-measurable subsets $B_j$ of $\mathbb{R}^n$ satisfying $\mu(B_j) < + \infty$, for all $1 \leq j \leq N$ \cite[Definition 1.1.13, p.~8]{analysis-bochner}. Moreover, we shall say that a function $f \colon \mathbb{R}^n \longrightarrow \mathscr{A}$ is \textit{strongly $\mu$-measurable} if it is the $\mu$-almost everywhere pointwise limit of a sequence of $\mu$-simple functions \cite[Definition 1.1.14, p.~8]{analysis-bochner}. Finally, the space of equivalence classes of strongly $\mu$-measurable square-integrable $\mathscr{A}$-valued functions on $\mathbb{R}^n$ will be denoted by $L^2(\mathbb{R}^n, \mathscr{A})$ \cite[Definition 1.2.15, p.~21]{analysis-bochner}. It will be customary to write simply $dx$, instead of $d\mu(x)$, when integrating a strongly $\mu$-measurable function $f \colon \mathbb{R}^n \longrightarrow \mathscr{A}$, $x \longmapsto f(x)$, with respect to the Lebesgue measure $\mu$. Hence, if $g \in L^2(\mathbb{R}^n, \mathscr{A})$, then
\begin{equation} \label{eq:L^2}
\|g\|_{L^2} := \left( \int_{\mathbb{R}^n} \|g(x)\|_\mathscr{A}^2 \, dx \right)^{1/2} = \left( \int_{\mathbb{R}^n} \|g(x)^* g(x)\|_\mathscr{A} \, dx \right)^{1/2} < + \infty.
\end{equation}
Comparing with \eqref{eq:E_n}, it is clear that $\|g\|_2 \leq \|g\|_{L^2} < + \infty$, for all $g \in \mathcal{S}^\mathscr{A}(\mathbb{R}^n)$. Also, $\mathcal{S}^\mathscr{A}(\mathbb{R}^n)$ is dense in $L^2(\mathbb{R}^n, \mathscr{A})$ with respect to the norm $\|\, \cdot \,\|_{L^2}$ (this follows from a standard argument combining \cite[Lemma 1.2.31, p.~29]{analysis-bochner} and \cite[Proposition 1.2.32, p.~29]{analysis-bochner}). Another fact which we shall frequently use throughout the paper is that $L^2(\mathbb{R}^n, \mathscr{A})$ is continuously embedded in $E_n$ as a dense subspace \cite[Appendix D]{cabralforgermelo}, with $L^2(\mathbb{R}^n)$ being isometrically embedded in $E_n$ \cite[Remark D.2, Appendix D]{cabralforgermelo}.

Let $J$ be a skew-symmetric linear transformation on $\mathbb{R}^n$, $f, g \in \mathcal{S}^\mathscr{A}(\mathbb{R}^n)$ and $x \in \mathbb{R}^n$. Then using the relations $\langle \xi, J\xi \rangle = \langle w, Jw \rangle = 0$, $\xi, w \in \mathbb{R}^n$ and Equation \eqref{rieffelop}, we obtain
\begin{align} \label{L1}
L_f(g)(x) &= \frac{1}{(2 \pi)^n} \int_{\mathbb{R}^n} \int_{\mathbb{R}^n} f \left( x - \frac{1}{2 \pi} J\xi \right) \, g(y) \, e^{i \langle \xi, x - y \rangle} \, dy \, d\xi \\ &= \frac{1}{(2 \pi)^{n / 2}} \int_{\mathbb{R}^n} f \left(x - \frac{1}{2 \pi} J\xi \right) \, \mathcal{F}(g)(\xi) \, e^{i \langle \xi, x \rangle} \, d\xi \nonumber \\ &= \frac{1}{(2 \pi)^n} \int_{\mathbb{R}^n} \int_{\mathbb{R}^n} \mathcal{F}(f)(w) \, \mathcal{F}(g)(\xi) \, e^{i \langle w + \xi, x - \frac{1}{2 \pi} J\xi \rangle} \, dw \, d\xi \nonumber \\ &= \frac{1}{(2 \pi)^n} \int_{\mathbb{R}^n} \int_{\mathbb{R}^n} \mathcal{F}(f)(w) \, \mathcal{F}(g)(\xi - w) \, e^{i \langle \xi, x - \frac{1}{2 \pi} J(\xi - w) \rangle} \, d\xi \, dw \nonumber \\ &= \frac{1}{(2 \pi)^n} \int_{\mathbb{R}^n} \int_{\mathbb{R}^n} \mathcal{F}(f)(w) \, \mathcal{F}(g)(\xi - w) \, e^{i \langle \xi, x + \frac{1}{2 \pi} Jw \rangle} \, d\xi \, dw \nonumber \\ &= \frac{1}{(2 \pi)^{n / 2}} \int_{\mathbb{R}^n} \mathcal{F}(f)(w) \, g \left( x + \frac{1}{2 \pi} Jw \right) \, e^{i \langle w, x \rangle} \, dw. \nonumber
\end{align}

Note that the integral in
\begin{equation*}
L_f(g)(\, \cdot \,) = \frac{1}{(2 \pi)^{n / 2}} \int_{\mathbb{R}^n} \mathcal{F}(f)(w) \, g \left( \, \cdot \, + \frac{1}{2 \pi} Jw \right) \, e^{i \langle w, \, \cdot \, \rangle} \, dw
\end{equation*}
remains absolutely convergent in the $L^2$-sense even if $g \in L^2(\mathbb{R}^n, \mathscr{A})$, with
\begin{align} \label{eq:l2-bounds}
\|L_f(g)\|_{L^2} & \leq \frac{1}{(2 \pi)^{n / 2}} \int_{\mathbb{R}^n} \|\mathcal{F}(f)(w) \, \mathcal{U}_{- \frac{1}{2 \pi} Jw, w, 0}(g)\|_{L^2} \, dw \\ & \leq \frac{1}{(2 \pi)^{n / 2}} \int_{\mathbb{R}^n} \|\mathcal{F}(f)(w)\|_\mathscr{A} \, \|\mathcal{U}_{- \frac{1}{2 \pi} Jw, w, 0}(g)\|_{L^2} \, dw \nonumber = \frac{1}{(2 \pi)^{n / 2}} \, \|\mathcal{F}(f)\|_1 \, \|g\|_{L^2}, \nonumber
\end{align}
where $\|\, \cdot \,\|_1$ denotes the usual L$^1$-norm. This proves the following useful result:

\begin{lemma} \label{lem:l2}
Let $f \in \mathcal{S}^\mathscr{A}(\mathbb{R}^n)$. Then $L_f$ extends to a continuous operator on $L^2(\mathbb{R}^n, \mathscr{A})$ such that the evaluation of $L_f$ on an element $g \in L^2(\mathbb{R}^n, \mathscr{A})$ is given by
\begin{equation} \label{eq:l2}
L_f(g)(\, \cdot \,) = \frac{1}{(2 \pi)^{n / 2}} \int_{\mathbb{R}^n} \mathcal{F}(f)(\xi) \, g \left( \, \cdot \, + \frac{1}{2 \pi} J\xi \right) \, e^{i \langle \xi, \, \cdot \, \rangle} \, d\xi, \qquad \text{with}
\end{equation}
\begin{equation} \label{eq:l2-estimates}
\|L_f(g)\|_{L^2} \leq \frac{1}{(2 \pi)^{n / 2}} \, \|\mathcal{F}(f)\|_1 \, \|g\|_{L^2}.
\end{equation}
\end{lemma}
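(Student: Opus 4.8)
The plan is to extract essentially everything from the computation already displayed in \eqref{L1} together with the estimate \eqref{eq:l2-bounds}; the genuine content still to be nailed down is that $L_f|_{\mathcal{S}^\mathcal{C}(\mathbb{R}^n)}$ is bounded for the norm $\|\cdot\|_{L^2}$ (so that it extends uniquely by density, $\mathcal{S}^\mathcal{C}(\mathbb{R}^n)$ being dense in $L^2(\mathbb{R}^n,\mathcal{C})$ as recalled at the start of the section), and that the extension is still given pointwise by \eqref{eq:l2}. The first point is immediate: \eqref{L1} shows that for $g\in\mathcal{S}^\mathcal{C}(\mathbb{R}^n)$ one has $L_f(g)(\cdot)=(2\pi)^{-n/2}\int_{\mathbb{R}^n}\mathcal{F}(f)(\xi)\,\mathcal{U}_{-\frac1{2\pi}J\xi,\xi,0}(g)(\cdot)\,d\xi$, and the chain of inequalities \eqref{eq:l2-bounds}—which uses only $\|ch\|_{L^2}\le\|c\|_\mathcal{C}\|h\|_{L^2}$ for $c\in\mathcal{C}$ and the fact that each $\mathcal{U}_{-\frac1{2\pi}J\xi,\xi,0}$ is an isometry of $L^2(\mathbb{R}^n,\mathcal{C})$—yields \eqref{eq:l2-estimates}, hence the desired unique continuous extension, still denoted $L_f$.

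For the second point I would, for an arbitrary $g\in L^2(\mathbb{R}^n,\mathcal{C})$, consider the $L^2(\mathbb{R}^n,\mathcal{C})$-valued map
\[\Phi_g\colon\ \xi\longmapsto \mathcal{F}(f)(\xi)\,\mathcal{U}_{-\frac1{2\pi}J\xi,\,\xi,\,0}(g)\ \in\ L^2(\mathbb{R}^n,\mathcal{C}).\]
Since the translation group and the modulation group act strongly continuously on $L^2(\mathbb{R}^n,\mathcal{C})$ and left multiplication $\mathcal{C}\times L^2(\mathbb{R}^n,\mathcal{C})\to L^2(\mathbb{R}^n,\mathcal{C})$ is continuous, $\Phi_g$ is continuous, hence strongly measurable; and $\|\Phi_g(\xi)\|_{L^2}\le\|\mathcal{F}(f)(\xi)\|_\mathcal{C}\,\|g\|_{L^2}$, with $\xi\mapsto\|\mathcal{F}(f)(\xi)\|_\mathcal{C}$ integrable because $\mathcal{F}(f)\in\mathcal{S}^\mathcal{C}(\mathbb{R}^n)$. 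Therefore $\Phi_g$ is Bochner integrable and $g\mapsto(2\pi)^{-n/2}\int_{\mathbb{R}^n}\Phi_g(\xi)\,d\xi$ defines a bounded linear operator on $L^2(\mathbb{R}^n,\mathcal{C})$ whose value on $g$, after identifying the Bochner integral with its pointwise-in-$x$ evaluation, is precisely the right-hand side of \eqref{eq:l2}. By \eqref{L1} this operator agrees with $L_f$ on $\mathcal{S}^\mathcal{C}(\mathbb{R}^n)$, so by uniqueness of the continuous extension it coincides with $L_f$ on all of $L^2(\mathbb{R}^n,\mathcal{C})$, which is the claimed formula.

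The step I expect to demand the most care is the bookkeeping around the vector-valued integral: verifying strong continuity of $\xi\mapsto\mathcal{U}_{-\frac1{2\pi}J\xi,\xi,0}(g)$ on $L^2(\mathbb{R}^n,\mathcal{C})$ for general $g$ (a routine consequence of strong continuity of translations on Bochner $L^2$-spaces, together with continuity of multiplication by the unimodular factor) and checking that the Bochner integral commutes with evaluation at $x$, so that it really produces the integrand in \eqref{eq:l2}. If one prefers to avoid developing the Bochner-integral picture directly, the same conclusion follows by first extending $L_f$ by density using \eqref{eq:l2-estimates} and then, for $g_k\to g$ in $L^2(\mathbb{R}^n,\mathcal{C})$ with $g_k\in\mathcal{S}^\mathcal{C}(\mathbb{R}^n)$, passing to the limit inside the integral via the uniform bound $\|\Phi_{g_k}(\xi)-\Phi_g(\xi)\|_{L^2}\le\|\mathcal{F}(f)(\xi)\|_\mathcal{C}\,\|g_k-g\|_{L^2}$ and dominated convergence, which identifies the extension with \eqref{eq:l2}.
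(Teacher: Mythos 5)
Your argument is correct and essentially coincides with the paper's: there the lemma is simply a record of the computation \eqref{L1} together with the estimate \eqref{eq:l2-bounds}, which already reads the right-hand side of \eqref{eq:l2} as an absolutely convergent $L^2(\mathbb{R}^n,\mathcal{C})$-valued integral for arbitrary $g \in L^2(\mathbb{R}^n,\mathcal{C})$. Your extra bookkeeping (strong measurability of $\Phi_g$, Bochner integrability, and identifying the density extension with the integral formula) just makes explicit what the paper leaves implicit.
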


Now consider a function $\psi \in C_c^\infty(\mathbb{R}^n)$ satisfying $\psi \geq 0$ and $\int_{\mathbb{R}^n} \psi(\xi) \, d\xi = 1$ such that its support $\text{supp }\psi$ is contained in the open ball $B(0, 1)$ of radius 1, centered at the origin. For each $m \in \mathbb{N}^*$, define $\psi_m(\xi) := m^n \, \psi(m \xi)$, so that $\text{supp }\psi_m \subseteq B(0, 1/m)$ and $\int_{\mathbb{R}^n} \psi_m(\xi) \, d\xi = 1$, and define
\begin{equation} \label{eq:e_m}
e_m := (2 \pi)^{n / 2} \, \mathcal{F}^{-1}(\psi_m), \qquad \tilde{e}_m := e_m \cdot 1_\mathscr{A}.
\end{equation}

\begin{lemma} \label{lem:polynomial}
For each fixed $g \in L^2(\mathbb{R}^n, \mathscr{A})$ and each polynomial function $p$ on $\mathbb{R}^n$ satisfying $p(0) = 0$, we have
\begin{equation} \label{eq:polynomial}
\lim_{m \rightarrow + \infty} \int_{\mathbb{R}^n} p(\xi) \, \psi_m(\xi) \, g \left( \, \cdot \, + \frac{1}{2 \pi} J\xi \right) e^{i \langle \xi, \, \cdot \, \rangle} \, d\xi = 0 \qquad \text{in $L^2(\mathbb{R}^n, \mathscr{A})$.}
\end{equation}
\end{lemma}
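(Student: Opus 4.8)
The plan is to bound the $L^2(\mathbb{R}^n,\mathcal{C})$-norm of the integral in \eqref{eq:polynomial} directly, exploiting that $\psi_m$ is an approximate identity concentrated near the origin and that the hypothesis $p(0)=0$ forces $|p(\xi)|$ to be of order $|\xi|$ on the shrinking supports $\mathrm{supp}\,\psi_m\subseteq B(0,1/m)$. First I would note that, just as in Lemma \ref{lem:l2} and the discussion around \eqref{eq:l2-bounds}, the map $\xi\longmapsto p(\xi)\,\psi_m(\xi)\,g\!\left(\,\cdot\,+\tfrac{1}{2\pi}J\xi\right)e^{i\langle\xi,\cdot\rangle}$ is a continuous, compactly supported $L^2(\mathbb{R}^n,\mathcal{C})$-valued function of $\xi$ (continuity follows from the joint strong continuity of the translation and modulation groups of isometries on $L^2(\mathbb{R}^n,\mathcal{C})$), so the integral is a well-defined Bochner integral to which the triangle inequality applies. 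Using that $g\longmapsto g\!\left(\,\cdot\,+\tfrac{1}{2\pi}J\xi\right)$ and $g\longmapsto e^{i\langle\xi,\cdot\rangle}g$ are isometries of $L^2(\mathbb{R}^n,\mathcal{C})$ (the latter because $|e^{i\langle\xi,x\rangle}|=1$ pointwise), I obtain
\[
\left\|\int_{\mathbb{R}^n} p(\xi)\,\psi_m(\xi)\,g\!\left(\,\cdot\,+\tfrac{1}{2\pi}J\xi\right)e^{i\langle\xi,\cdot\rangle}\,d\xi\right\|_{L^2}\le\|g\|_{L^2}\int_{\mathbb{R}^n}|p(\xi)|\,\psi_m(\xi)\,d\xi.
\]

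Next I would estimate the scalar integral on the right. Since $p$ is a polynomial with $p(0)=0$, the mean value inequality provides a constant $C>0$, depending only on $p$ and $n$, with $|p(\xi)|\le C|\xi|$ for all $|\xi|\le1$. As $\mathrm{supp}\,\psi_m\subseteq B(0,1/m)\subseteq B(0,1)$ for $m\ge1$, the change of variables $\eta=m\xi$ gives
\[
\int_{\mathbb{R}^n}|p(\xi)|\,\psi_m(\xi)\,d\xi\le C\int_{\mathbb{R}^n}|\xi|\,\psi_m(\xi)\,d\xi=\frac{C}{m}\int_{\mathbb{R}^n}|\eta|\,\psi(\eta)\,d\eta\le\frac{C}{m},
\]
where the last inequality uses $\psi\ge0$, $\mathrm{supp}\,\psi\subseteq B(0,1)$ and $\int_{\mathbb{R}^n}\psi=1$. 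Combining the two displays yields
\[
\left\|\int_{\mathbb{R}^n} p(\xi)\,\psi_m(\xi)\,g\!\left(\,\cdot\,+\tfrac{1}{2\pi}J\xi\right)e^{i\langle\xi,\cdot\rangle}\,d\xi\right\|_{L^2}\le\frac{C\,\|g\|_{L^2}}{m}\xrightarrow[m\to+\infty]{}0,
\]
which is the claim.

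I do not expect a serious obstacle here: the estimate is elementary, the key point being that $p(0)=0$ upgrades the usual approximate-identity convergence to an explicit $O(1/m)$ decay. The only matters requiring a little care are the measurability and integrability of the $L^2(\mathbb{R}^n,\mathcal{C})$-valued integrand and the observation that multiplication by a unimodular scalar function preserves the $L^2(\mathbb{R}^n,\mathcal{C})$-norm; both are routine and do not use unitality of $\mathcal{C}$.
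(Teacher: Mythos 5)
Your proof is correct and follows essentially the same route as the paper: both reduce the claim to the scalar estimate $\bigl\|\int p(\xi)\,\psi_m(\xi)\,g(\,\cdot\,+\tfrac{1}{2\pi}J\xi)\,e^{i\langle\xi,\cdot\rangle}\,d\xi\bigr\|_{L^2}\leq \|g\|_{L^2}\int |p(\xi)|\,\psi_m(\xi)\,d\xi$ and then exploit $p(0)=0$ together with $\mathrm{supp}\,\psi_m\subseteq B(0,1/m)$. The only cosmetic differences are that you justify the Bochner integral directly instead of identifying it with $L_{f_m}(g)$ via Lemma \ref{lem:l2}, and you upgrade the paper's $\epsilon$--$\delta$ splitting to an explicit $O(1/m)$ rate via the mean value inequality.
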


\begin{proof}
If $p(\xi) := \sum_{0 < |\alpha| \leq d} c_\alpha \, \xi^\alpha$, with $c_\alpha \in \mathbb{C}$, $\alpha \in \mathbb{N}^n$, then
\begin{equation*}
p(\xi) \, \mathcal{F}(\tilde{e}_m)(\xi) = \mathcal{F} \left( p \left( \frac{1}{i^{\alpha_1}} \, \frac{\partial^{\alpha_1}}{\partial \xi_1^{\alpha_1}}, \ldots, \frac{1}{i^{\alpha_n}} \, \frac{\partial^{\alpha_n}}{\partial \xi_n^{\alpha_n}} \right) \tilde{e}_m \right)(\xi), \qquad m \in \mathbb{N}^*.
\end{equation*}
Therefore, 
\begin{equation}
p(\xi) \, \psi_m(\xi) \cdot 1_\mathscr{A} = \mathcal{F} \left( \underbrace{\frac{1}{(2 \pi)^{n / 2}} \, p \left( \frac{1}{i^{\alpha_1}} \, \frac{\partial^{\alpha_1}}{\partial \xi_1^{\alpha_1}}, \ldots, \frac{1}{i^{\alpha_n}} \, \frac{\partial^{\alpha_n}}{\partial \xi_n^{\alpha_n}} \right) \tilde{e}_m}_{:= f_m} \right)(\xi), \qquad m \in \mathbb{N}^*,
\end{equation}
so, as noted in Lemma \ref{lem:l2}, the integral in Equation \eqref{eq:polynomial} is absolutely convergent in $L^2(\mathbb{R}^n, \mathscr{A})$ and equals $L_{f_m}(g)$. Fix $\epsilon > 0$ and let $\delta > 0$ be a real number such that $|p(\xi)| < \epsilon / (\|g(\, \cdot \,)\|_{L^2} + 1)$, for all $\xi \in \mathbb{R}^n$ satisfying $|\xi| < \delta$. Also, fix $m_0 \in \mathbb{N}^*$ such that $1 / m_0 < \delta$. Then reasoning similarly as in Equation \eqref{eq:l2-bounds} shows that
\begin{align}
& \left\| \int_{\mathbb{R}^n} p(\xi) \, \psi_m(\xi) \, g \left(\, \cdot \, + \frac{1}{2 \pi} J\xi \right) e^{i \langle \xi, \, \cdot \, \rangle} \, d\xi \right\|_{L^2} \leq \int_{\mathbb{R}^n} \psi_m(\xi) \, |p(\xi)| \|g(\, \cdot \,)\|_{L^2} \, d\xi \\ = & \underbrace{\int_{|\xi| < \delta} \psi_m(\xi) \, |p(\xi)| \|g(\, \cdot \,)\|_{L^2} \, d\xi}_{(I)} + \underbrace{\int_{|\xi| \geq \delta} \psi_m(\xi) \, |p(\xi)| \|g(\, \cdot \,)\|_{L^2} \, d\xi}_{(II)}, \qquad m \in \mathbb{N}^*. \nonumber
\end{align}
Since $\int_{\mathbb{R}^n} \psi_m(\xi) \, d\xi = 1$, for all $m \in \mathbb{N}^*$, we conclude that $(I) < \epsilon$. Furthermore, for all $m \in \mathbb{N}$ satisfying $m \geq m_0$, we have $(II) = 0$, since $\text{supp }\psi_m \subseteq B(0, 1 / m) \subseteq B(0, \delta)$. This proves the result.
\end{proof}

\begin{lemma} \label{lem:derivatives}
Let $D_0 := \sum_{0 < |\alpha| \leq d} c_\alpha \, \partial^\alpha$ be a constant coefficient differential operator of order $d$, where $c_\alpha \in \mathbb{C}$, $\alpha \in \mathbb{N}^{2n}$ and $\partial^\alpha := \partial_1^{\alpha_1} \ldots \partial_{2n}^{\alpha_{2n}}$ is a monomial in the generators of the adjoint representation $\text{Ad }\mathcal{U}$ (note that $c_0 = 0$). For each fixed $g \in L^2(\mathbb{R}^n, \mathscr{A})$, we have the equality
\begin{equation*}
\lim_{m \rightarrow + \infty} D_0(L_{\tilde{e}_m})(g) = 0 \qquad \text{in $L^2(\mathbb{R}^n, \mathscr{A})$.}
\end{equation*}
\end{lemma}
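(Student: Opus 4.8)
The plan is to reduce the claim to Lemma \ref{lem:polynomial} by first describing explicitly how the generators $\partial_1,\dots,\partial_{2n}$ of $\text{Ad }\mathcal{U}$ act on a Rieffel operator $L_f$ with symbol $f\in\mathcal{S}^\mathcal{C}(\mathbb{R}^n)$.

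First I would record the intertwining identities
\[
(\text{Ad }\mathcal{U})(\texttt{a},0)(L_f)=L_{f(\,\cdot\,-\texttt{a})},\qquad (\text{Ad }\mathcal{U})(0,\texttt{b})(L_f)=L_{f(\,\cdot\,+\frac{1}{2\pi}J\texttt{b})},\qquad f\in\mathcal{S}^\mathcal{C}(\mathbb{R}^n),
\]
which follow at once from the $L^2$-representation \eqref{eq:l2} (or from the fact that, on symbols, $\text{Ad }\mathcal{U}$ acts by phase-space translations), since $\mathcal{U}_{\texttt{a},0,0}$ and $\mathcal{U}_{0,\texttt{b},0}$ are translation and modulation operators, respectively. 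Differentiating these identities at the group identity, and using on the one hand that $L_f$ is a Heisenberg smooth operator whenever $f\in\mathcal{S}^\mathcal{C}(\mathbb{R}^n)$ (so that $\partial^\alpha(L_f)$ is defined) and on the other hand that $f\longmapsto L_f$ is continuous from $\mathcal{S}^\mathcal{C}(\mathbb{R}^n)$ with its Fr\'echet topology into $(\mathcal{L}_\mathcal{C}(E_n),\|\,\cdot\,\|)$ — a consequence of the Calder\'on--Vaillancourt-type estimate \cite[Theorem 4.6 \& Corollary 4.7]{rieffel} — one obtains $\partial_k(L_f)=L_{-\partial_{x_k}f}$ and $\partial_{n+k}(L_f)=L_{\frac{1}{2\pi}\sum_{j}J_{jk}\,\partial_{x_j}f}$ for $1\le k\le n$. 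In other words, each generator $\partial_j$ sends $L_f$ to $L_{D_jf}$, where $D_j$ is a first-order, constant-coefficient, homogeneous differential operator on $\mathbb{R}^n$; since $D_jf\in\mathcal{S}^\mathcal{C}(\mathbb{R}^n)$ again, this identity iterates to give $\partial^\alpha(L_f)=L_{D^\alpha f}$, where $D^\alpha:=D_1^{\alpha_1}\cdots D_{2n}^{\alpha_{2n}}$ is a constant-coefficient differential operator on $\mathbb{R}^n$, homogeneous of degree $|\alpha|$.

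Applying this with $f=\tilde{e}_m$ and summing over $\alpha$ yields $D_0(L_{\tilde{e}_m})=L_{(D e_m)\cdot 1_\mathcal{C}}$, where $D:=\sum_{0<|\alpha|\le d}c_\alpha D^\alpha$ is a constant-coefficient differential operator on $\mathbb{R}^n$ whose zeroth-order term vanishes (since $c_0=0$ and each $D^\alpha$ with $|\alpha|\ge1$ is homogeneous of positive degree). Writing $q$ for the polynomial symbol of $D$, we have $q(0)=0$ and, since $\mathcal{F}(\tilde{e}_m)=(2\pi)^{n/2}\psi_m\cdot 1_\mathcal{C}$, also $\mathcal{F}\big((D e_m)\cdot 1_\mathcal{C}\big)=(2\pi)^{n/2}\,q\,\psi_m\cdot 1_\mathcal{C}$. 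Substituting this into the $L^2$-representation \eqref{eq:l2} of Lemma \ref{lem:l2} gives, for every $g\in L^2(\mathbb{R}^n,\mathcal{C})$,
\[
D_0(L_{\tilde{e}_m})(g)(\,\cdot\,)=\int_{\mathbb{R}^n}q(\xi)\,\psi_m(\xi)\,g\!\left(\,\cdot\,+\tfrac{1}{2\pi}J\xi\right)e^{i\langle\xi,\,\cdot\,\rangle}\,d\xi ,
\]
which is exactly the expression appearing in \eqref{eq:polynomial} with $p=q$. As $q(0)=0$, Lemma \ref{lem:polynomial} then yields $\lim_{m\to+\infty}D_0(L_{\tilde{e}_m})(g)=0$ in $L^2(\mathbb{R}^n,\mathcal{C})$.

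I expect the main difficulty to lie in the first step: the rigorous identification $\partial^\alpha(L_f)=L_{D^\alpha f}$. One must invoke the nontrivial fact that $L_f$ is Heisenberg smooth for Schwartz $f$, justify that the difference quotients of the intertwining identities converge in operator norm (this is precisely where the continuity of $f\mapsto L_f$ from the Schwartz Fr\'echet topology into $\mathcal{L}_\mathcal{C}(E_n)$, via the Calder\'on--Vaillancourt inequality, enters), and keep careful track of the compatibility between $D_0(L_{\tilde{e}_m})$ as an element of $\mathcal{L}_\mathcal{C}(E_n)$ and its explicit $L^2$-realization furnished by Lemma \ref{lem:l2}, so that $D_0(L_{\tilde{e}_m})(g)$ is meaningful for $g\in L^2(\mathbb{R}^n,\mathcal{C})$ rather than merely for $g\in\mathcal{S}^\mathcal{C}(\mathbb{R}^n)$. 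Once this bookkeeping is in place, the convergence is immediate from Lemma \ref{lem:polynomial}.
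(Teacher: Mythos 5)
Your proposal is correct and follows essentially the same route as the paper: identify $\partial^\alpha(L_{\tilde{e}_m})$ as the Rieffel operator of a constant-coefficient derivative of $\tilde{e}_m$, pass to the Fourier side where this becomes a polynomial factor vanishing at the origin, and conclude via Lemma \ref{lem:polynomial}. The only real difference is how the identity $\partial_k(L_\phi)=L_{D_k\phi}$ is justified: the paper verifies the derivative directly in $L^2(\mathbb{R}^n,\mathcal{C})$ through the bound \eqref{eq:l2-estimates} (citing \cite{cabralforgermelo} for the operator-level identity), whereas you differentiate the intertwining identities in operator norm using Calder\'on--Vaillancourt continuity of $f\mapsto L_f$; both work, with the paper's variant having the small advantage of simultaneously producing the explicit $L^2$-realization needed to evaluate on $g\in L^2(\mathbb{R}^n,\mathcal{C})$, the bookkeeping point you correctly flag.
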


\begin{proof}
Denote by $(f_k)_{1 \leq k \leq n}$ the canonical basis of $\mathbb{R}^n$ and, for each $v \in \mathbb{R}^n$, denote by $\partial_v$ the operator on $\mathcal{S}^\mathscr{A}(\mathbb{R}^n)$ which associates the corresponding directional derivative $\partial_v \phi$ of a function $\phi \in \mathcal{S}^\mathscr{A}(\mathbb{R}^n)$. We note that, as shown in \cite[Subsection ``The algebra $\mathcal{B}_J^\mathscr{A}(\mathbb{R}^n)$'', Equation (3.22)]{cabralforgermelo}, the evaluation of the differential operator $D_0$ on $L_{\tilde{e}_m}$ is legitimate, with
\begin{equation*}
\partial_1^{\beta_1} \ldots \partial_{2n}^{\beta_{2n}} (L_{\tilde{e}_m}) = (-1)^{|\beta|} \, L_{\partial_{v_1}^{\beta_1} \ldots \partial_{v_{2n}}^{\beta_{2n}} \tilde{e}_m},
\end{equation*}
where $|\beta| := \sum_{k = 1}^{2n} \beta_k$, $v_k = f_k$, if $1 \leq k \leq n$ and $v_k = J(f_{k - n}) / 2 \pi$, if $n + 1 \leq k \leq 2n$ (note that $\partial_k$ is acting on operators in $C^\infty(\text{Ad }\mathcal{U})$, while $\partial_{v_k}$ acts on functions in $\mathcal{S}^\mathscr{A}(\mathbb{R}^n)$).

Let $T_v$ be the translation operator on $E_n$ defined in Lemma \ref{lem:s_to_cinfty}. Then since $L^2(\mathbb{R}^n, \mathscr{A})$ is continuously embedded in $E_n$ as a dense subspace, we may apply the operators $T_v$ to elements of $L^2(\mathbb{R}^n, \mathscr{A})$. In the following calculations, we will use the simplified notation $\mathcal{U}_{h f_k}$ to denote both the operators $\mathcal{U}_{h f_k, 0, 0}$ and $\mathcal{U}_{0, h f_k, 0}$. Using \eqref{eq:l2} and \eqref{eq:l2-estimates} we obtain, for a fixed $1 \leq k \leq 2n$ and every $0 \neq h \in \mathbb{R}$, $\phi \in \mathcal{S}^\mathscr{A}(\mathbb{R}^n)$:
\begin{align*}
& \left\| \left[ \frac{\mathcal{U}_{h f_k} \, L_\phi \, \mathcal{U}_{-h f_k} - L_\phi}{h} \right](g)(\, \cdot \,) - \frac{1}{(2 \pi)^{n / 2}} \int_{\mathbb{R}^n} \mathcal{F}(- \partial_{v_k} \phi)(\xi) \, g \left( \, \cdot \, + \frac{1}{2 \pi} J\xi \right) \, e^{i \langle \xi, \, \cdot \, \rangle} \, d\xi \right\|_{L^2} \\
& = \left\| \frac{1}{(2 \pi)^{n / 2}} \int_{\mathbb{R}^n} \mathcal{F} \left( \frac{T_{h v_k} \phi - \phi}{h} - (-\partial_{v_k}) \phi \right)(\xi) \, g \left( \, \cdot \, + \frac{1}{2 \pi} J\xi \right) \, e^{i \langle \xi, \, \cdot \, \rangle} \, d\xi \right\|_{L^2} \\
& \leq \frac{1}{(2 \pi)^{n / 2}} \, \left\| \mathcal{F} \left( \frac{T_{h v_k} \phi - \phi}{h} - (-\partial_{v_k}) \phi \right) \right\|_1 \, \|g\|_{L^2}.
\end{align*}
Since
\begin{equation*}
\mathcal{F} \left( \frac{T_{h v_k} \phi - \phi}{h} - (- \partial_{v_k}) \phi \right) \longrightarrow 0 \quad \text{in $\mathcal{S}^\mathscr{A}(\mathbb{R}^n)$ (hence, in $L^1(\mathbb{R}^n, \mathscr{A})$)}
\end{equation*}
when $h \rightarrow 0$, we see that
\begin{equation*}
\partial_k(L_\phi)(g) = - \frac{1}{(2 \pi)^{n / 2}} \int_{\mathbb{R}^n} \mathcal{F}(\partial_{v_k} \phi)(\xi) \, g \left( \, \cdot \, + \frac{1}{2 \pi} J\xi \right) \, e^{i \langle \xi, \, \cdot \, \rangle} \, d\xi \, \, (= - L_{\partial_{v_k} \phi}(g)), \qquad \phi \in \mathcal{S}^\mathscr{A}(\mathbb{R}^n).
\end{equation*}
Iterating this procedure and using the identity $\mathcal{F}((\partial/\partial_{\xi_l})(\phi))(\xi) = i \xi_l \, (\mathcal{F}(\phi))(\xi)$, $1 \leq l \leq n$, shows that, if $\alpha \in \mathbb{N}^{2n}$, $\alpha \neq 0$, then
\begin{equation*}
\partial^\alpha (L_\phi)(g) = \frac{1}{(2 \pi)^{n / 2}} \int_{\mathbb{R}^n} p_\alpha(\xi) \, \mathcal{F}(\phi)(\xi) \, g \left( \, \cdot \, + \frac{1}{2 \pi} J\xi \right) \, e^{i \langle \xi, \, \cdot \, \rangle} \, d\xi, \qquad \phi \in \mathcal{S}^\mathscr{A}(\mathbb{R}^n)
\end{equation*}
where $p_\alpha$ is a polynomial of degree $deg \, p_\alpha \geq 1$ satisfying $p_\alpha(0) = 0$. Substituting $\phi = \tilde{e}_m$ and using that $\mathcal{F}(\tilde{e}_m) = (2 \pi)^{n / 2} \, \psi_m \cdot 1_\mathscr{A}$ we get from the definition of $D_0$ the relation
\begin{equation*}
D_0(L_{\tilde{e}_m})(g) = \int_{\mathbb{R}^n} p(\xi) \, \psi_m(\xi) \, g \left( \, \cdot \, + \frac{1}{2 \pi} J\xi \right) \, e^{i \langle \xi, \, \cdot \, \rangle} \, d\xi, \qquad m \in \mathbb{N}^*,
\end{equation*}
where $p$ is a linear combination of polynomials $q$ of degree $deg \, q \geq 1$ satisfying $q(0) = 0$ (so $p$ also satisfies $p(0) = 0$). Therefore, by Lemma \ref{lem:polynomial}, the limit $\lim_{m \rightarrow + \infty} D_0(L_{\tilde{e}_m})(g)$ exists in $L^2(\mathbb{R}^n, \mathscr{A})$ and equals zero.
\end{proof}

Now consider the symbol map (see \cite[Equation (12)]{melomerklen2})
\begin{equation*}
S \colon C^\infty(\text{Ad }\mathcal{U}) \longrightarrow \mathcal{B}^\mathscr{A}(\mathbb{R}^{2n})
\end{equation*}
given by
\begin{equation} \label{S}
S(A)(x, \xi) := (2 \pi)^{n/2} \langle u \cdot 1_\mathscr{A}, \left\{ (D \, [(\text{Ad }\mathcal{U})_{-x, -\xi}(A)] \circ \mathcal{F}^{-1}) \otimes I_{E_n} \right\} \, v \cdot 1_\mathscr{A} \rangle_{E_{2n}},
\end{equation}
for all $A \in C^\infty(\text{Ad }\mathcal{U})$ and $(x, \xi) \in \mathbb{R}^{2n}$, where $D := \prod_{j = 1}^n (1 + \partial_{x_j})^2 (1 + \partial_{\xi_j})^2$ and $u$ and $v$ are (fixed) suitable scalar-valued functions belonging to $L^2(\mathbb{R}^{2n}) \cap L^1(\mathbb{R}^{2n})$ (for more information on the tensor product operator in Equation \eqref{S}, see \cite[Appendix C, Lemma C.1]{cabralforgermelo}). We make the trivial, but important, observation that the formula defining the map $S$ remains unchanged if we substitute $\left\{ (D \, [(\text{Ad }\mathcal{U})_{-x, -\xi}(A)] \circ \mathcal{F}^{-1}) \otimes I_{E_n} \right\}$ by the restricted map
\begin{equation*}
\left\{ (D \, [(\text{Ad }\mathcal{U})_{-x, -\xi}(A)] \circ \mathcal{F}^{-1}) \otimes I_{E_n} \right\}|_{L^2(\mathbb{R}^{2n}) \cdot 1_\mathscr{A}} \colon L^2(\mathbb{R}^{2n}) \cdot 1_\mathscr{A} \longrightarrow E_{2n}.
\end{equation*}
Define the restriction map $\mathcal{R} \colon \mathcal{B}^\mathscr{A}(\mathbb{R}^{2n}) \longrightarrow \mathcal{B}^\mathscr{A}(\mathbb{R}^n)$ which takes the function $f \colon (x, \xi) \longmapsto f(x, \xi)$ in $\mathcal{B}^\mathscr{A}(\mathbb{R}^{2n})$ to the function $\mathcal{R}f(x) := f(x, 0)$, and let
\begin{equation*}
L \colon \mathcal{B}^\mathscr{A}(\mathbb{R}^n) \longrightarrow C^\infty(\text{Ad }\mathcal{U})
\end{equation*}
be the map which sends a function $f \in \mathcal{B}^\mathscr{A}(\mathbb{R}^n)$ to the operator $L_f$. Using \cite[Theorem 1]{melomerklen2} (which remains valid even if $\mathscr{A}$ is non-separable -- see \cite[Appendix C]{cabralforgermelo}), we see that this map satisfies
\begin{equation} \label{eq:left_inverse}
\mathcal{R} \circ S \circ L = Id_{\mathcal{B}^\mathscr{A}(\mathbb{R}^n)},
\end{equation}
where $\text{Id}_{\mathcal{B}^\mathscr{A}(\mathbb{R}^n)}$ is the identity operator on $\mathcal{B}^\mathscr{A}(\mathbb{R}^n)$.

Before we can establish the main result of this section (Proposition \ref{prop:aprox}), we need two auxiliary lemmas (the first one, below, is an adaptation of \cite[Proposi\c c\~ao 2.5]{merklentese}):

\begin{lemma} \label{lem:approxidentity}
Let $(\tilde{e}_m)_{m \in \mathbb{N}^*}$ be the sequence in $\mathcal{S}^\mathscr{A}(\mathbb{R}^n)$ introduced in Equation \eqref{eq:e_m}. Then for every $g \in \mathcal{S}^\mathscr{A}(\mathbb{R}^n)$, we have the equality
\begin{equation} \label{eq:approxidentity0}
\lim_{m \rightarrow + \infty} L_{\tilde{e}_m}(g) = g \qquad \text{in $E_n$.}
\end{equation}
\end{lemma}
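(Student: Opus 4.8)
The plan is to combine the explicit $L^2$-formula for $L_f$ from Lemma~\ref{lem:l2} with the fact that $(\psi_m)_m$ is an approximate identity, reducing the asserted convergence in $E_n$ to convergence in $L^2(\mathbb{R}^n,\mathcal{C})$; this reduction is legitimate because $L^2(\mathbb{R}^n,\mathcal{C})$ is continuously embedded in $E_n$ and $\|\cdot\|_2\leq\|\cdot\|_{L^2}$ on $\mathcal{S}^\mathcal{C}(\mathbb{R}^n)$, and $L_{\tilde e_m}(g)=\tilde e_m\times_J g\in\mathcal{S}^\mathcal{C}(\mathbb{R}^n)$. Since $\mathcal{F}(\tilde e_m)=(2\pi)^{n/2}\,\psi_m\cdot 1_\mathcal{C}$, substituting into \eqref{eq:l2} gives, for $g\in\mathcal{S}^\mathcal{C}(\mathbb{R}^n)$,
\[
L_{\tilde e_m}(g)(x)=\int_{\mathbb{R}^n}\psi_m(\xi)\,g\!\left(x+\frac{1}{2\pi}J\xi\right)e^{i\langle\xi,x\rangle}\,d\xi ,
\]
and because $\int_{\mathbb{R}^n}\psi_m=1$ we may write
\[
L_{\tilde e_m}(g)(x)-g(x)=\int_{\mathbb{R}^n}\psi_m(\xi)\left[\,g\!\left(x+\tfrac{1}{2\pi}J\xi\right)e^{i\langle\xi,x\rangle}-g(x)\,\right]d\xi .
\]

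Next I would split the bracket as $\bigl[g(x+\tfrac{1}{2\pi}J\xi)-g(x)\bigr]e^{i\langle\xi,x\rangle}+g(x)\bigl[e^{i\langle\xi,x\rangle}-1\bigr]$ and estimate the two resulting integrals separately in the $L^2$-norm. For the first, the (Bochner) fundamental theorem of calculus gives $\|g(x+\tfrac{1}{2\pi}J\xi)-g(x)\|_\mathcal{C}\leq\tfrac{\|J\|}{2\pi}|\xi|\,\sup_{0\leq t\leq 1}\|(\nabla g)(x+\tfrac{t}{2\pi}J\xi)\|_\mathcal{C}$; since $\text{supp }\psi_m\subseteq B(0,1/m)$, for $m\geq 1$ the supremum is dominated by the rapidly decreasing (hence $L^2$) function $h(x):=\sup_{|z|\leq\|J\|/2\pi}\|(\nabla g)(x+z)\|_\mathcal{C}$, so the first integral has $L^2$-norm at most $\tfrac{\|J\|}{2\pi m}\,\|h\|_{L^2}\to 0$. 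For the second, $\|g(x)[e^{i\langle\xi,x\rangle}-1]\|_\mathcal{C}\leq\|g(x)\|_\mathcal{C}\,\min(2,|\xi|\,|x|)$, and integrating against $\psi_m$, which is supported in $B(0,1/m)$, bounds the second integral pointwise by $h_m(x):=\|g(x)\|_\mathcal{C}\,\min(2,|x|/m)$; since $h_m\leq 2\|g(\cdot)\|_\mathcal{C}\in L^2(\mathbb{R}^n)$ for all $m$ and $h_m(x)\to 0$ as $m\to+\infty$ for each fixed $x$, dominated convergence yields $\|h_m\|_{L^2}\to 0$.

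Combining the two estimates gives $\|L_{\tilde e_m}(g)-g\|_{L^2}\to 0$, hence $\|L_{\tilde e_m}(g)-g\|_2\to 0$, which is the claim. The only genuine subtlety is the oscillatory factor $e^{i\langle\xi,x\rangle}$: it is not uniformly close to $1$ on $\text{supp }\psi_m$ when $|x|$ is large, so a naive uniform bound is not available and one must exploit the Schwartz decay of $g$ through the dominated-convergence step applied to $h_m$; the remainder is the standard approximate-identity argument.
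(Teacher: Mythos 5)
Your argument is correct, and it takes a mildly different route from the paper's. Both proofs start from the same pointwise formula $L_{\tilde e_m}(g)(x)-g(x)=\int\psi_m(\xi)\bigl[e^{i\langle\xi,x\rangle}g(x+\tfrac{1}{2\pi}J\xi)-g(x)\bigr]d\xi$ (the last line of \eqref{L1}, together with $\int\psi_m=1$) and both reduce $E_n$-convergence to $L^2(\mathbb{R}^n,\mathcal{C})$-convergence via $\|\cdot\|_2\leq\|\cdot\|_{L^2}$ on $\mathcal{S}^\mathcal{C}(\mathbb{R}^n)$; the difference is in how the bracket is controlled. The paper keeps it as a single expression and proves the uniform weighted $\epsilon$--$\delta$ estimate \eqref{eq:approxidentity}, $(1+|x|^2)^{n/2}\|e^{i\langle x,\xi\rangle}g(x+J\xi)-g(x)\|_\mathcal{C}<\epsilon$ for $|\xi|$ small, by uniform continuity on a large compact set plus Schwartz decay of $g$ outside it, and then converts this sup-type bound into an $L^2$ bound by integrating against the integrable weight $(1+|x|^2)^{-n}$. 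You instead split into a translation part and a modulation part: the translation part gets an explicit $O(1/m)$ rate from the Bochner fundamental theorem of calculus with the rapidly decreasing majorant $h$, and the modulation part $g(x)[e^{i\langle\xi,x\rangle}-1]$ is handled by dominated convergence (here you could even avoid DCT, since $h_m(x)\leq |x|\,\|g(x)\|_\mathcal{C}/m$ and $|x|\,\|g(x)\|_\mathcal{C}\in L^2$, giving a rate for this term as well). Your decomposition is more quantitative and avoids the compactness/uniform-continuity argument and the auxiliary weight; the paper's single uniform estimate is what lets it bypass any differentiability-based bound and is stated in a form reused verbatim later in its proof. All the supporting facts you invoke ($\mathcal{F}(\tilde e_m)=(2\pi)^{n/2}\psi_m\cdot 1_\mathcal{C}$, $\mathrm{supp}\,\psi_m\subseteq B(0,1/m)$, the pointwise formula for Schwartz $g$, membership of the majorants in $L^2$) are available and correctly used, so I see no gap.
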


\begin{proof}
Fix a skew-symmetric linear transformation $J$ on $\mathbb{R}^n$ and $0 \neq g \in \mathcal{S}^\mathscr{A}(\mathbb{R}^n)$. We begin the proof by showing that, for any $\epsilon > 0$ there exists $\delta > 0$ with the property that, for all $x \in \mathbb{R}^n$ and all $\xi \in \mathbb{R}^n$ satisfying $|\xi| < \delta$, we have
\begin{equation} \label{eq:approxidentity}
\|(1 + |x|^2)^{n / 2} \, [e^{i \langle x, \xi \rangle} \, g(x + J\xi) - g(x)]\|_\mathscr{A} < \epsilon.
\end{equation}

Take $\epsilon > 0$ and define $K := \|g\|_\infty := \sup_{y \in \mathbb{R}^n} \|g(y)\|_\mathscr{A}$. Since $g$ belongs to $\mathcal{S}^\mathscr{A}(\mathbb{R}^n)$ there exists $R_1 > 1$ such that
\begin{equation} \label{eq:approxidentity1}
(1 + |y|^2)^{n / 2} \, \|g(y)\|_\mathscr{A} \leq \epsilon / (1 + 4^{n / 2}), \qquad \text{whenever $|y| > R_1$}.
\end{equation}
Moreover, choose $R_2 > 0$ such that $|J\xi| < 1$, for all $\xi \in \mathbb{R}^n$ satisfying $|\xi| < R_2$.

By the uniform continuity of the maps
\begin{equation*}
(x, \xi) \longmapsto (1 + |x + J\xi|^2)^{n / 2} \qquad \text{and} \qquad (x, \xi) \longmapsto (1 + |x + J\xi|^2)^{n / 2} \, e^{i \langle x, \xi \rangle} \, g(x + J\xi)
\end{equation*}
on the compact set $S := \left\{ (x, \xi) \in \mathbb{R}^{2n}: |x| \leq R_1 + 1, |\xi| \leq R_2 \right\}$, we may find $0 < R_3 < R_2$ such that, if $(x, \xi) \in S$ and $|(x, \xi) - (x, 0)| = |\xi| < R_3$, then
\begin{equation*}
|(1 + |x + J\xi|^2)^{n / 2} - (1 + |x|^2)^{n / 2}| < \epsilon / (2K)
\end{equation*}
and
\begin{equation*}
\|(1 + |x + J\xi|^2)^{n / 2} \, e^{i \langle x, \xi \rangle} \, g(x + J\xi) - (1 + |x|^2)^{n / 2} \, g(x)\|_\mathscr{A} < \epsilon / 2.
\end{equation*}
Hence, if $|x| \leq R_1 + 1$ and $|\xi| < R_3$, we have
\begin{align*}
& \|(1 + |x|^2)^{n / 2} \, e^{i \langle x, \xi \rangle} \, g(x + J\xi) - (1 + |x|^2)^{n / 2} \, g(x)\|_\mathscr{A} \\ & \leq \|(1 + |x + J\xi|^2)^{n / 2} \, e^{i \langle x, \xi \rangle} \, g(x + J\xi) - (1 + |x|^2)^{n / 2} \, g(x)\|_\mathscr{A} \\ & + |(1 + |x + J\xi|^2)^{n / 2} - (1 + |x|^2)^{n / 2}| \, \|g(x + J\xi)\|_\mathscr{A} < \epsilon / 2 + \epsilon / 2 = \epsilon.
\end{align*}
On the other hand, if $|x| > R_1 + 1$ and $|\xi| < R_3$, then $|x + J\xi| \geq |x| - |J\xi| > R_1 > 1$, so $|J\xi| < 1 < |x + J\xi|$. Therefore, $|x| \leq |x + J\xi| + |J\xi| < 2 |x + J\xi|$ which, when combined with \eqref{eq:approxidentity1}, implies the estimates
\begin{align*}
& \|(1 + |x|^2)^{n / 2} \, e^{i \langle x, \xi \rangle} \, g(x + J\xi) - (1 + |x|^2)^{n / 2} \, g(x)\|_\mathscr{A} \\ & \leq (1 + 4 |x + J\xi|^2)^{n / 2} \, \|g(x + J\xi)\|_\mathscr{A} + (1 + |x|^2)^{n / 2} \, \|g(x)\|_\mathscr{A} \\ & \leq 4^{n / 2} \, (1 + |x + J\xi|^2)^{n / 2} \, \|g(x + J\xi)\|_\mathscr{A} + (1 + |x|^2)^{n / 2} \, \|g(x)\|_\mathscr{A} \\ & \leq (1 + 4^{n / 2}) \sup_{|y| > R_1} [(1 + |y|^2)^{n / 2} \, \|g(y)\|_\mathscr{A}] \leq (1 + 4^{n / 2}) \cdot \frac{\epsilon}{1 + 4^{n / 2}} = \epsilon
\end{align*}
(note that we have used the fact that $|x + J\xi| > R_1$). This establishes \eqref{eq:approxidentity} with $\delta := R_3$.

Now, we apply what was just proved for the skew-symmetric linear transformation $J' = J / (2 \pi)$: if we fix $\epsilon_0 > 0$ then we may use the relation (see Equation \eqref{L1})
\begin{equation*}
L_f(g)(x) = \frac{1}{(2 \pi)^{n/2}} \int_{\mathbb{R}^n} \mathcal{F}(f)(w) \, g \left( x + \frac{1}{2 \pi} Jw \right) \, e^{i \langle w, x \rangle} \, dw, \qquad f \in \mathcal{S}^\mathscr{A}(\mathbb{R}^n), \, x \in \mathbb{R}^n
\end{equation*}
to obtain $\delta_0 > 0$ such that, for all $m \in \mathbb{N}$ satisfying $m > 1 / \delta_0$ and any fixed $x \in \mathbb{R}^n$,
\begin{equation*}
(1 + |x|^2)^{n / 2} \, \|L_{\tilde{e}_m}(g)(x) - g(x)\|_\mathscr{A} \leq \int_{\mathbb{R}^n} \psi_m(\xi) \, (1 + |x|^2)^{n / 2} \, \left\| e^{i \langle x, \xi \rangle} g \left( x + \frac{1}{2 \pi} J\xi \right) - g(x) \right\|_\mathscr{A} \, d\xi < \epsilon_0.
\end{equation*}
Define $L_0 := \int_{\mathbb{R}^n} (1 + |x|^2)^{- n} \, dx$. Then the above inequality allows us to conclude, for all $m \in \mathbb{N}$ satisfying $m > 1 / \delta_0$, the estimates
\begin{align}
\|L_{\tilde{e}_m}(g) - g\|_2^2 & \leq \int_{\mathbb{R}^n} \|L_{\tilde{e}_m}(g)(x) - g(x)\|_\mathscr{A}^2 \, dx \\ & = \int_{\mathbb{R}^n} (1 + |x|^2)^{- n} \, [(1 + |x|^2)^{n / 2} \, \|L_{\tilde{e}_m}(g)(x) - g(x)\|_\mathscr{A}]^2 \, dx < L_0 \, \epsilon_0^2. \nonumber
\end{align}
This establishes \eqref{eq:approxidentity0}.
\end{proof}

\begin{corollary} \label{cor:approxidentity}
Let $(\tilde{e}_m)_{m \in \mathbb{N}^*}$ be the sequence in $\mathcal{S}^\mathscr{A}(\mathbb{R}^n)$ introduced in Equation \eqref{eq:e_m}. Then for every $\tilde{f} \in L^2(\mathbb{R}^n) \cdot 1_\mathscr{A}$, we have the equality
\begin{equation} \label{eq:approxidentity_corollary}
\lim_{m \rightarrow + \infty} L_{\tilde{e}_m}(\tilde{f}) = \tilde{f} \qquad \text{in $E_n$.}
\end{equation}
\end{corollary}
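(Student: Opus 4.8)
The plan is to deduce the corollary from Lemma \ref{lem:approxidentity} by a density argument, the extra ingredient being a uniform bound on the operators $L_{\tilde{e}_m}$ acting on $L^2(\mathbb{R}^n, \mathcal{C})$. First I would record that bound. Since $e_m = (2\pi)^{n/2}\mathcal{F}^{-1}(\psi_m)$, one has $\mathcal{F}(\tilde{e}_m) = (2\pi)^{n/2}\,\psi_m \cdot 1_\mathcal{C}$, and because $\psi_m \geq 0$ with $\int_{\mathbb{R}^n}\psi_m(\xi)\,d\xi = 1$ it follows that $\|\mathcal{F}(\tilde{e}_m)\|_1 = (2\pi)^{n/2}$ for every $m \in \mathbb{N}\backslash\{0\}$. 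Plugging this into the estimate \eqref{eq:l2-estimates} of Lemma \ref{lem:l2} gives
\[
\|L_{\tilde{e}_m}(h)\|_{L^2} \leq \|h\|_{L^2}, \qquad h \in L^2(\mathbb{R}^n, \mathcal{C}), \quad m \in \mathbb{N}\backslash\{0\},
\]
so in particular each $L_{\tilde{e}_m}$ is a well-defined bounded operator on $L^2(\mathbb{R}^n, \mathcal{C})$ with norm at most $1$.

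Next, given $\tilde{f} \in L^2(\mathbb{R}^n) \cdot 1_\mathcal{C} \subseteq L^2(\mathbb{R}^n, \mathcal{C})$ and $\epsilon > 0$, I would choose $g \in \mathcal{S}^\mathcal{C}(\mathbb{R}^n)$ with $\|\tilde{f} - g\|_{L^2} < \epsilon$; this is possible since $\mathcal{S}^\mathcal{C}(\mathbb{R}^n)$ is dense in $L^2(\mathbb{R}^n, \mathcal{C})$ in the $\|\cdot\|_{L^2}$-norm (indeed one may even take $g \in \mathcal{S}(\mathbb{R}^n)\cdot 1_\mathcal{C}$, using that $\mathcal{S}(\mathbb{R}^n)$ is dense in $L^2(\mathbb{R}^n)$ and that $h \longmapsto h\cdot 1_\mathcal{C}$ is isometric into $L^2(\mathbb{R}^n,\mathcal{C})$). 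Then I decompose
\[
L_{\tilde{e}_m}(\tilde{f}) - \tilde{f} = L_{\tilde{e}_m}(\tilde{f} - g) + \bigl(L_{\tilde{e}_m}(g) - g\bigr) + (g - \tilde{f})
\]
and estimate the three terms in the $E_n$-norm $\|\cdot\|_2$. Using $\|\cdot\|_2 \leq \|\cdot\|_{L^2}$ together with the uniform bound from the previous step, the first and third terms are each bounded by $\epsilon$ uniformly in $m$; the middle term tends to $0$ as $m \to +\infty$ by Lemma \ref{lem:approxidentity}, since $g \in \mathcal{S}^\mathcal{C}(\mathbb{R}^n)$. Hence $\limsup_{m \to +\infty}\|L_{\tilde{e}_m}(\tilde{f}) - \tilde{f}\|_2 \leq 2\epsilon$, and letting $\epsilon \to 0$ establishes \eqref{eq:approxidentity_corollary}.

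There is no genuine obstacle here; the only point requiring attention is the bookkeeping with the three relevant topologies — the $E_n$-norm $\|\cdot\|_2$, the $L^2(\mathbb{R}^n,\mathcal{C})$-norm $\|\cdot\|_{L^2}$, and the isometric embedding $L^2(\mathbb{R}^n)\hookrightarrow E_n$ — and the fact, supplied by Lemma \ref{lem:l2}, that $L_{\tilde{e}_m}$ genuinely makes sense and is bounded on all of $L^2(\mathbb{R}^n,\mathcal{C})$, so that the displayed decomposition is legitimate for $\tilde{f}, g \in L^2(\mathbb{R}^n,\mathcal{C})$ and not merely for Schwartz functions.
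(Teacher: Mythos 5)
Your proposal is correct and follows essentially the same route as the paper: the uniform bound $\|L_{\tilde{e}_m}\,\tilde{h}\|_{L^2} \leq \|\tilde{h}\|_{L^2}$ obtained from Equation \eqref{eq:l2-estimates} together with $\|\mathcal{F}(\tilde{e}_m)\|_1 = (2\pi)^{n/2}$, followed by a three-term density argument that reduces to Lemma \ref{lem:approxidentity}. The only cosmetic difference is that you approximate by general elements of $\mathcal{S}^\mathcal{C}(\mathbb{R}^n)$ and estimate directly in $\|\cdot\|_2$ via $\|\cdot\|_2 \leq \|\cdot\|_{L^2}$, whereas the paper takes approximants in $\mathcal{S}(\mathbb{R}^n)\cdot 1_\mathcal{C}$ and uses that the $E_n$- and $L^2$-topologies coincide on $L^2(\mathbb{R}^n)\cdot 1_\mathcal{C}$; both versions are sound.
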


\begin{proof}
Using Equation \eqref{eq:l2-estimates} of Lemma \ref{lem:l2} and the definition of $\tilde{e}_m := (2 \pi)^{n / 2} \, \mathcal{F}^{-1}(\psi_m) \cdot 1_\mathscr{A}$ gives
\begin{equation*}
\|L_{\tilde{e}_m}(\tilde{h})\|_{L^2} \leq \|\psi_m \cdot 1_\mathscr{A}\|_1 \, \|\tilde{h}\|_{L^2} = \|\tilde{h}\|_{L^2}, \qquad m \in \mathbb{N}^*, \, \tilde{h} \in L^2(\mathbb{R}^n) \cdot 1_\mathscr{A}.
\end{equation*}
Therefore, the result follows from Lemma \ref{lem:approxidentity} by noting that (1) each $L_{\tilde{e}_m}$ leaves $L^2(\mathbb{R}^n) \cdot 1_\mathscr{A}$ invariant, (2) restricting the topologies of $L^2(\mathbb{R}^n, \mathscr{A})$ and of $E_n$ to $L^2(\mathbb{R}^n) \cdot 1_\mathscr{A}$ yield the same canonical topology, (3) $\mathcal{S}(\mathbb{R}^n) \cdot 1_\mathscr{A}$ is dense in $L^2(\mathbb{R}^n) \cdot 1_\mathscr{A}$ with respect to this topology and that (4) the estimates
\begin{equation*}
\|L_{\tilde{e}_m}(\tilde{f}) - \tilde{f}\|_{L^2} \leq \|L_{\tilde{e}_m}(\tilde{f} - \tilde{g})\|_{L^2} + \|L_{\tilde{e}_m}(\tilde{g}) - \tilde{g}\|_{L^2} + \|\tilde{g} - \tilde{f}\|_{L^2}, \qquad m \in \mathbb{N}^*
\end{equation*}
hold, for every $\tilde{f} \in L^2(\mathbb{R}^n) \cdot 1_\mathscr{A}$ and $\tilde{g} \in \mathcal{S}(\mathbb{R}^n) \cdot 1_\mathscr{A}$.
\end{proof}

\begin{lemma} \label{prop:aprox'}
For every $A \in C^\infty(\text{Ad }\mathcal{U})$, $\tilde{g} \in L^2(\mathbb{R}^n) \cdot 1_\mathscr{A}$ and $(x, \xi) \in \mathbb{R}^{2n}$, we have
\begin{equation} \label{eq:pointwiseconvergence}
\lim_{m \rightarrow + \infty} \left\{ D \, [(\text{Ad }\mathcal{U})_{-x, -\xi}(A \circ L_{\tilde{e}_m} - A)] \circ \mathcal{F}^{-1} \right\}(\tilde{g}) = 0 \qquad \text{in $E_n$.}
\end{equation}
\end{lemma}

\begin{proof}
First, note that
\begin{align} \label{eq:linearcombination}
& D \, [(\text{Ad }\mathcal{U})_{-x, -\xi}(A \circ L_{\tilde{e}_m})] \circ \mathcal{F}^{-1} = \left\{ [D \, ((\text{Ad }\mathcal{U})_{-x, -\xi}(A))] \circ L_{\tilde{e}_m} \right\} \circ \mathcal{F}^{-1} \\
& + \text{ linear combination of terms of the form } \nonumber \\
& \left\{ \partial_{x, \xi}^\alpha [(\text{Ad }\mathcal{U})_{-x, -\xi}(A)] \circ [\mathcal{U}_{-x, -\xi} \, \partial^\beta \, (L_{\tilde{e}_m}) \, (\mathcal{U}_{-x, -\xi})^{-1}] \right\} \circ \mathcal{F}^{-1}, \nonumber
\end{align}
where the $\partial^\beta$'s, $\beta \neq 0$, are monomials in the generators $(\partial_k)_{1 \leq k \leq 2n}$ of the adjoint representation $\text{Ad }\mathcal{U}$. Therefore, for all $\tilde{g} \in L^2(\mathbb{R}^n) \cdot 1_\mathscr{A}$, we have as a consequence of Lemma \ref{lem:derivatives} and Corollary \ref{cor:approxidentity} that
\begin{align} \label{eq:pointwiseconvergence2}
& \lim_{m \rightarrow + \infty} \left\{ D \, [(\text{Ad }\mathcal{U})_{-x, -\xi}(A \circ L_{\tilde{e}_m})] \circ \mathcal{F}^{-1} \right\}(\tilde{g}) \\
= & \lim_{m \rightarrow + \infty} \left\{ [D \, ((\text{Ad }\mathcal{U})_{-x, -\xi}(A))] \circ L_{\tilde{e}_m} \circ \mathcal{F}^{-1} \right\}(\tilde{g}) \nonumber \\
= & \left\{ [D \, ((\text{Ad }\mathcal{U})_{-x, -\xi}(A))] \circ \mathcal{F}^{-1} \right\}(\tilde{g}), \nonumber
\end{align}
where the limit is performed in the Hilbert C$^*$-module $E_n$ (for the nonzero order terms, we have used that the $L^2$-topology is finer than the topology of $E_n$). This proves Equation \eqref{eq:pointwiseconvergence}.
\end{proof}

\begin{proposition} \label{prop:aprox}
For every $A \in C^\infty(\text{Ad }\mathcal{U})$ and each fixed $(x, \xi) \in \mathbb{R}^{2n}$, we have
\begin{equation} \label{eq:aprox_limit}
\lim_{m \rightarrow + \infty} S(A \circ L_{\tilde{e}_m})(x, \xi) = S(A)(x, \xi) \qquad \text{in $\mathscr{A}$.}
\end{equation}
\end{proposition}

\begin{proof}
Let us begin by majorizing the norms of the (restricted) linear maps (from $L^2(\mathbb{R}^{2n}) \cdot 1_\mathscr{A}$ to $E_{2n}$)
\begin{equation*}
\left\{ (D \, [(\text{Ad }\mathcal{U})_{-x, -\xi}(A \circ L_{\tilde{e}_m})] \circ \mathcal{F}^{-1}) \otimes I_{E_n} \right\}|_{L^2(\mathbb{R}^{2n}) \cdot 1_\mathscr{A}}, \qquad m \in \mathbb{N}^*, \, (x, \xi) \in \mathbb{R}^{2n},
\end{equation*}
uniformly in $m$ and in $(x, \xi)$, by establishing norm bounds on each of the summands in \eqref{eq:linearcombination}.

Using again, just as in Lemma \ref{lem:derivatives}, the fact that $\partial_1^{\beta_1} \ldots \partial_{2n}^{\beta_{2n}} (L_{\tilde{e}_m}) = (-1)^{|\beta|} \, L_{\partial_{v_1}^{\beta_1} \ldots \partial_{v_{2n}}^{\beta_{2n}} \tilde{e}_m}$, where $|\beta| := \sum_{k = 1}^{2n} \beta_k$, $v_k = f_k$, if $1 \leq k \leq n$ and $v_k = J(f_{k - n}) / 2 \pi$, if $n + 1 \leq k \leq 2n$, we see, in particular, that each $\partial^\beta \, (L_{\tilde{e}_m})$ sends $L^2(\mathbb{R}^n) \cdot 1_\mathscr{A}$ into itself, as a result of Lemma \ref{lem:l2}. This conclusion will be useful soon, when we have to write a certain operator (restricted to $L^2(\mathbb{R}^{2n}) \cdot 1_\mathscr{A}$) as the composition of two tensor product operators.

Applying Lemma \ref{lem:derivatives} and the Uniform Boundedness Principle for the restricted bounded operators $[\partial^\beta \, (L_{\tilde{e}_m})]|_{L^2(\mathbb{R}^n) \cdot 1_\mathscr{A}}$, $\beta \neq 0$, we conclude that there exists $M_\beta > 0$ such that 
\begin{equation*}
\sup_{m \in \mathbb{N}^*} \|[\partial^\beta \, (L_{\tilde{e}_m})]|_{L^2(\mathbb{R}^n) \cdot 1_\mathscr{A}}\| \leq M_\beta,
\end{equation*}
so
\begin{equation} \label{eq:unifboundednessprinciple}
\sup_{m \in \mathbb{N}^*} \|\left\{ [\mathcal{U}_{-x, -\xi} \, \partial^\beta \, (L_{\tilde{e}_m}) \, (\mathcal{U}_{-x, -\xi})^{-1}] \circ \mathcal{F}^{-1} \right\}|_{L^2(\mathbb{R}^n) \cdot 1_\mathscr{A}}\| \leq M_\beta
\end{equation}
(the norm, above, is the usual operator norm on $L^2(\mathbb{R}^n) \cdot 1_\mathscr{A}$; note that the constant $M_\beta$ does not depend on the fixed $(x, \xi) \in \mathbb{R}^{2n}$). Hence, the norm of each operator
\begin{equation*}
\left\{ [\mathcal{U}_{-x, -\xi} \, \partial^\beta \, (L_{\tilde{e}_m}) \, (\mathcal{U}_{-x, -\xi})^{-1}] \circ \mathcal{F}^{-1} \right\}|_{L^2(\mathbb{R}^n) \cdot 1_\mathscr{A}} \otimes I_{L^2(\mathbb{R}^n) \cdot 1_\mathscr{A}}
\end{equation*}
on the $L^2$-completion
\begin{equation*}
\left[ L^2(\mathbb{R}^n) \cdot 1_\mathscr{A} \right] \otimes \left[ L^2(\mathbb{R}^n) \cdot 1_\mathscr{A} \right] \simeq L^2(\mathbb{R}^{2n}) \cdot 1_\mathscr{A},
\end{equation*}
of the algebraic tensor product $\left[ L^2(\mathbb{R}^n) \cdot 1_\mathscr{A} \right] \otimes_{\text{alg}} \left[ L^2(\mathbb{R}^n) \cdot 1_\mathscr{A} \right]$ is bounded by $M_\beta$, independently of $m$ and of $(x, \xi)$. Combining this observation with the identity
\begin{align*}
& \left\{ \left\{ \partial_{x, \xi}^\alpha [(\text{Ad }\mathcal{U})_{-x, -\xi}(A)] \circ [\mathcal{U}_{-x, -\xi} \, \partial^\beta \, (L_{\tilde{e}_m}) \, (\mathcal{U}_{-x, -\xi})^{-1}] \circ \mathcal{F}^{-1} \right\} \otimes I_{E_n} \right\}|_{L^2(\mathbb{R}^{2n}) \cdot 1_\mathscr{A}} \\
& = \left\{ \partial_{x, \xi}^\alpha [(\text{Ad }\mathcal{U})_{-x, -\xi}(A)] \otimes I_{E_n} \right\}|_{L^2(\mathbb{R}^{2n}) \cdot 1_\mathscr{A}} \\
& \hspace{4.0cm} \circ \left\{ \left\{ [\mathcal{U}_{-x, -\xi} \, \partial^\beta \, (L_{\tilde{e}_m}) \, (\mathcal{U}_{-x, -\xi})^{-1}] \circ \mathcal{F}^{-1} \right\} |_{L^2(\mathbb{R}^n) \cdot 1_\mathscr{A}} \otimes I_{L^2(\mathbb{R}^n) \cdot 1_\mathscr{A}} \right\}
\end{align*}
we obtain for each term (corresponding to $\beta \neq 0$) in the linear combination appearing in \eqref{eq:linearcombination} the estimates
\begin{align*}
& \left\| \left\{ \left\{ \partial_{x, \xi}^\alpha [(\text{Ad }\mathcal{U})_{-x, -\xi}(A)] \circ [\mathcal{U}_{-x, -\xi} \, \partial^\beta \, (L_{\tilde{e}_m}) \, (\mathcal{U}_{-x, -\xi})^{-1}] \circ \mathcal{F}^{-1} \right\} \otimes I_{E_n} \right\}|_{L^2(\mathbb{R}^{2n}) \cdot 1_\mathscr{A}} \right\| \\
& \leq \left\| \partial_{x, \xi}^\alpha [(\text{Ad }\mathcal{U})_{-x, -\xi}(A)] \otimes I_{E_n} \right\| \\
& \hspace{8.6em} \cdot \left\| \left\{ [\mathcal{U}_{-x, -\xi} \, \partial^\beta \, (L_{\tilde{e}_m}) \, (\mathcal{U}_{-x, -\xi})^{-1}] \circ \mathcal{F}^{-1} \right\} |_{L^2(\mathbb{R}^n) \cdot 1_\mathscr{A}} \otimes I_{L^2(\mathbb{R}^n) \cdot 1_\mathscr{A}} \right\| \\
& \leq M_\beta \, \left\| \partial_{x, \xi}^\alpha [(\text{Ad }\mathcal{U})_{-x, -\xi}(A)] \right\| = M_\beta \, \left\| \mathcal{U}_{-x, -\xi} \, \partial^\alpha (A) \, (\mathcal{U}_{-x, -\xi})^{-1} \right\| \leq M_\beta \left\| \partial^\alpha (A) \right\|,
\end{align*}
where: (1) the norm in the first line is just the usual one of a bounded linear map from $L^2(\mathbb{R}^{2n}) \cdot 1_\mathscr{A}$ to $E_{2n}$; (2') the second norm in the second line (from left to right) is the usual operator norm on $L^2(\mathbb{R}^{2n}) \cdot 1_\mathscr{A}$; (2'') the first norm in the second line (from left to right) is the operator norm on $E_{2n}$ (we have implicitly used that, if $T \colon E_{2n} \longrightarrow E_{2n}$ is a bounded operator, if $\|T|_{L^2(\mathbb{R}^{2n}) \cdot 1_\mathscr{A}}\|_1$ denotes the usual norm of $T|_{L^2(\mathbb{R}^{2n}) \cdot 1_\mathscr{A}}$ as a bounded linear map from $L^2(\mathbb{R}^{2n}) \cdot 1_\mathscr{A}$ to $E_{2n}$, and $\|T\|_2$ denotes the usual operator norm on $E_{2n}$, then $\|T|_{L^2(\mathbb{R}^{2n}) \cdot 1_\mathscr{A}}\|_1 \leq \|T\|_2$); (3) the norms in the third line are all operator norms on $E_{2n}$.

Note that, in order to obtain the above identity, we have used the fact that
\begin{equation*}
[\mathcal{U}_{-x, -\xi} \, \partial^\beta \, (L_{\tilde{e}_m}) \, (\mathcal{U}_{-x, -\xi})^{-1}] \circ \mathcal{F}^{-1}
\end{equation*}
leaves $L^2(\mathbb{R}^n) \cdot 1_\mathscr{A}$ invariant, so the equality
\begin{align*}
& \left\{ \left\{ [\mathcal{U}_{-x, -\xi} \, \partial^\beta \, (L_{\tilde{e}_m}) \, (\mathcal{U}_{-x, -\xi})^{-1}] \circ \mathcal{F}^{-1} \right\} \otimes I_{E_n} \right\}|_{L^2(\mathbb{R}^{2n}) \cdot 1_\mathscr{A}} \\
& = \left\{ \left\{ [\mathcal{U}_{-x, -\xi} \, \partial^\beta \, (L_{\tilde{e}_m}) \, (\mathcal{U}_{-x, -\xi})^{-1}] \circ \mathcal{F}^{-1} \right\}|_{L^2(\mathbb{R}^n) \cdot 1_\mathscr{A}} \right\} \otimes I_{L^2(\mathbb{R}^n) \cdot 1_\mathscr{A}}
\end{align*}
holds. We also make the observation that one of the tensor products is performed between adjointable operators on $E_n$, while the other one is performed between bounded operators on $L^2(\mathbb{R}^n) \cdot 1_\mathscr{A}$ (both of them are denoted simply by ``$\otimes$'').

Finally, to deal with the first summand in Equation \eqref{eq:linearcombination} we note that, defining $\tilde{D} := \prod_{j = 1}^n (1 + \partial_j)^2 (1 + \partial_{j + n})^2$, we have the equality
\begin{equation*}
D \, [(\text{Ad }\mathcal{U})_{-x, -\xi}(A)] = (\text{Ad }\mathcal{U})_{-x, -\xi}(\tilde{D}(A)), \qquad (x, \xi) \in \mathbb{R}^{2n},
\end{equation*}
so adapting the argument contained in (2''), above, to $E_n$, we obtain the estimate
\begin{align*}
\| \left\{ [D \, ((\text{Ad }\mathcal{U})_{-x, -\xi}(A))] \circ L_{\tilde{e}_m} \circ \mathcal{F}^{-1} \right\}|_{L^2(\mathbb{R}^n) \cdot 1_\mathscr{A}} \| \leq \|\tilde{D}(A)\| \, \|L_{\tilde{e}_m}|_{L^2(\mathbb{R}^n) \cdot 1_\mathscr{A}}\| \, \|\mathcal{F}^{-1}|_{L^2(\mathbb{R}^n) \cdot 1_\mathscr{A}}\|,
\end{align*}
where (i) $\|\tilde{D}(A)\|$ is the operator norm of $\mathcal{L}_\mathscr{A}(E_n)$ evaluated on $\tilde{D}(A)$, and (ii) the other two norms involved are just the usual ones of bounded linear operators on $L^2(\mathbb{R}^n) \cdot 1_\mathscr{A}$. Therefore, since as a consequence of the first estimate obtained in the proof of Corollary \ref{cor:approxidentity}, we have $\|L_{\tilde{e}_m}|_{L^2(\mathbb{R}^n) \cdot 1_\mathscr{A}}\| \leq 1$, for all $m \in \mathbb{N}$, we get
\begin{equation*}
\| \left\{ [D \, ((\text{Ad }\mathcal{U})_{-x, -\xi}(A))] \circ L_{\tilde{e}_m} \circ \mathcal{F}^{-1} \right\}|_{L^2(\mathbb{R}^n) \cdot 1_\mathscr{A}} \| \leq \|\tilde{D}(A)\|.
\end{equation*}
In order to finish the proof of the desired uniform boundedness for the norms of
\begin{equation*}
\left\{ ([D \, ((\text{Ad }\mathcal{U})_{-x, -\xi}(A))] \circ L_{\tilde{e}_m} \circ \mathcal{F}^{-1}) \otimes I_{E_n} \right\}|_{L^2(\mathbb{R}^{2n}) \cdot 1_\mathscr{A}},
\end{equation*}
we need to ``tensor product'' this last estimate and go from dimension $n$ to $2n$. This can be done exactly as we did for the terms depending on a $\beta \neq 0$.

Therefore, we have just proved the existence of a constant $M > 0$, independent of $(x, \xi) \in \mathbb{R}^{2n}$, such that
\begin{equation} \label{eq:unifboundedness}
\sup_{m \in \mathbb{N}^*} \left\| \left\{ (D \, [(\text{Ad }\mathcal{U})_{-x, -\xi}(A \circ L_{\tilde{e}_m})] \circ \mathcal{F}^{-1}) \otimes I_{E_n} \right\}|_{L^2(\mathbb{R}^{2n}) \cdot 1_\mathscr{A}} \right\| \leq M.
\end{equation} 
 
Now we will show how to prove Equation \eqref{eq:aprox_limit} using \eqref{eq:pointwiseconvergence} and \eqref{eq:unifboundedness}. By the Cauchy-Schwarz inequality applied to Equation \eqref{S} (with $A$ substituted by $A \circ L_{\tilde{e}_m} - A$),
\begin{equation*}
\|S(A \circ L_{\tilde{e}_m} - A)(x, \xi)\|_\mathscr{A}
\end{equation*}
\begin{equation*}
\leq (2 \pi)^{n/2} \, \|u \cdot 1_\mathscr{A}\|_{E_{2n}} \, \|\left\{ (D \, [(\text{Ad }\mathcal{U})_{-x, -\xi}(A \circ L_{\tilde{e}_m} - A)] \circ \mathcal{F}^{-1}) \otimes I_{E_n} \right\} \, v \cdot 1_\mathscr{A} \|_{E_{2n}},
\end{equation*}
hence it suffices to show that
\begin{equation*}
\lim_{m \rightarrow + \infty} \|\left\{ (D \, [(\text{Ad }\mathcal{U})_{-x, -\xi}(A \circ L_{\tilde{e}_m} - A)] \circ \mathcal{F}^{-1}) \otimes I_{E_n} \right\} \, v \cdot 1_\mathscr{A} \|_{E_{2n}} = 0.
\end{equation*}
Fix $\epsilon > 0$ and define $K := \|\tilde{D}(A)\| = \|(\text{Ad }\mathcal{U})_{-x, -\xi}(\tilde{D}(A))\| = \|D \, [(\text{Ad }\mathcal{U})_{-x, -\xi}(A)]\|$, $(x, \xi) \in \mathbb{R}^{2n}$. Since the algebraic tensor product $L^2(\mathbb{R}^n) \otimes_{\text{alg}} L^2(\mathbb{R}^n)$ can be naturally identified as a dense subspace of $L^2(\mathbb{R}^{2n})$, we can find $f \in L^2(\mathbb{R}^n) \otimes_{\text{alg}} L^2(\mathbb{R}^n)$ such that
\begin{equation*}
\|(f - v) \cdot 1_{\mathscr{A}}\|_{L^2} < \frac{\epsilon}{3 (M + 1) (K + 1)}.
\end{equation*}
Moreover, by \eqref{eq:pointwiseconvergence} there exists $m_0 \in \mathbb{N}$ such that $m \geq m_0$ implies
\begin{equation*}
\|\left\{ (D \, [(\text{Ad }\mathcal{U})_{-x, -\xi}(A \circ L_{\tilde{e}_m} - A)] \circ \mathcal{F}^{-1}) \otimes I_{E_n} \right\} \, f \cdot 1_\mathscr{A} \|_{E_{2n}} < \frac{\epsilon}{3} \cdot
\end{equation*}
Therefore, combining these approximations with \eqref{eq:unifboundedness} yields for every $m \geq m_0$ the estimates
\begin{align*}
& \|\left\{ (D \, [(\text{Ad }\mathcal{U})_{-x, -\xi}(A \circ L_{\tilde{e}_m} - A)] \circ \mathcal{F}^{-1}) \otimes I_{E_n} \right\} \, v \cdot 1_\mathscr{A} \|_{E_{2n}} \\
\leq & \|\left\{ (D \, [(\text{Ad }\mathcal{U})_{-x, -\xi}(A \circ L_{\tilde{e}_m})] \circ \mathcal{F}^{-1}) \otimes I_{E_n} \right\} \, (f - v) \cdot 1_\mathscr{A} \|_{E_{2n}} \\
+ & \|\left\{ (D \, [(\text{Ad }\mathcal{U})_{-x, -\xi}(A \circ L_{\tilde{e}_m} - A)] \circ \mathcal{F}^{-1}) \otimes I_{E_n} \right\} \, f \cdot 1_\mathscr{A} \|_{E_{2n}} \\
+ & \|\left\{ (D \, [(\text{Ad }\mathcal{U})_{-x, -\xi}(A)] \circ \mathcal{F}^{-1}) \otimes I_{E_n} \right\} \, (f - v) \cdot 1_\mathscr{A} \|_{E_{2n}} \\ \leq & \frac{M \, \epsilon}{3 (M + 1) (K + 1)} + \frac{\epsilon}{3} + \frac{K \, \epsilon}{3 (M + 1) (K + 1)} < \epsilon.
\end{align*}
This completes the proof.
\end{proof}

\section{Proof of Theorem A} \label{sect:conjecture}

Fix $A \in C^\infty(\text{Ad }\mathcal{U}) \cap R_n'$ and $g \in \mathcal{S}^\mathscr{A}(\mathbb{R}^n)$. Equation \eqref{eq:unifboundedness} guarantees the existence of a constant $M > 0$ such that
\begin{equation*}
\sup_{m \in \mathbb{N}^*} \left\| \left\{ (D \, [(\text{Ad }\mathcal{U})_{-x, -\xi}(A \circ L_{\tilde{e}_m})] \circ \mathcal{F}^{-1}) \otimes I_{E_n} \right\}|_{L^2(\mathbb{R}^{2n}) \cdot 1_\mathscr{A}} \right\| \leq M.
\end{equation*}
Therefore, applying the Cauchy-Schwarz inequality to the expression \eqref{S} defining the symbol map $S$ gives, for every fixed $(x, \xi) \in \mathbb{R}^{2n}$, the estimates
\begin{align*}
& \|S(A \circ L_{\tilde{e}_m})(x, \xi)\|_\mathscr{A} \\
& \leq (2 \pi)^{n/2} \|u \cdot 1_\mathscr{A}\|_{L^2} \|\left\{ \left\{ D \, [(\text{Ad }\mathcal{U})_{-x, -\xi}(A \circ L_{\tilde{e}_m})] \circ \mathcal{F}^{-1} \right\} \otimes I_{E_n} \right\}|_{L^2(\mathbb{R}^{2n}) \cdot 1_\mathscr{A}}\| \, \|v \cdot 1_\mathscr{A}\|_{L^2} \\ 
& \leq M \, (2 \pi)^{n/2} \, \|u \cdot 1_\mathscr{A}\|_{L^2} \, \|v \cdot 1_\mathscr{A}\|_{L^2}, \qquad m \in \mathbb{N}^*.
\end{align*}
In particular,
\begin{equation*}
\|(\mathcal{R} \circ S)(A \circ L_{\tilde{e}_m})(x)\|_\mathscr{A} = \|S(A \circ L_{\tilde{e}_m})(x, 0)\|_\mathscr{A} \leq M \, (2 \pi)^{n/2} \, \|u \cdot 1_\mathscr{A}\|_{L^2} \, \|v \cdot 1_\mathscr{A}\|_{L^2},
\end{equation*}
for every $x \in \mathbb{R}^n$ and $m \in \mathbb{N}^*$. Substituting $x$ by $x - \frac{1}{2 \pi} J\xi$ in the above Equation and then multiplying both sides by the number $\|\mathcal{F}(g)(\xi)\|_\mathscr{A}$ we conclude, using the submultiplicative property of the C$^*$-norm $\|\, \cdot \,\|_\mathscr{A}$, that the estimate
\begin{equation} \label{eq:dominatedconvergence}
\|(\mathcal{R} \circ S)(A \circ L_{\tilde{e}_m}) \left( x - \frac{1}{2 \pi} J\xi \right) \, \mathcal{F}(g)(\xi)\|_\mathscr{A} \leq M \, (2 \pi)^{n/2} \, \|u \cdot 1_\mathscr{A}\|_{L^2} \, \|v \cdot 1_\mathscr{A}\|_{L^2} \, \|\mathcal{F}(g)(\xi)\|_\mathscr{A}
\end{equation}
holds, for every $(x, \xi) \in \mathbb{R}^{2n}$ and $m \in \mathbb{N}^*$. As a consequence of Proposition \ref{prop:aprox}, we have the following pointwise convergence (in $\mathscr{A}$):
\begin{equation*}
\lim_{m \rightarrow + \infty} S(A \circ L_{\tilde{e}_m})(x, \xi) = S(A)(x, \xi), \qquad (x, \xi) \in \mathbb{R}^{2n}.
\end{equation*}
Hence, using the definition of $\mathcal{R}$,
\begin{equation*}
\lim_{m \rightarrow + \infty} (\mathcal{R} \circ S)(A \circ L_{\tilde{e}_m}) \left( x - \frac{1}{2 \pi} J\xi \right) \, \mathcal{F}(g)(\xi) = (\mathcal{R} \circ S)(A) \left( x - \frac{1}{2 \pi} J\xi \right) \, \mathcal{F}(g)(\xi), \qquad (x, \xi) \in \mathbb{R}^{2n},
\end{equation*}
which when combined with the estimate in Equation \eqref{eq:dominatedconvergence} allows an application of the dominated convergence theorem \cite[Proposition 1.2.5, p.~16]{analysis-bochner}, yielding, for every fixed $x \in \mathbb{R}^n$, the equality
\begin{equation} \label{eq:dominatedconvergence2}
\lim_{m \rightarrow + \infty} \int_{\mathbb{R}^n} \left\| [(\mathcal{R} \circ S)(A \circ L_{\tilde{e}_m}) - (\mathcal{R} \circ S)(A)] \left( x - \frac{1}{2 \pi} J\xi \right) \, \mathcal{F}(g)(\xi) \right\|_\mathscr{A} \, d\xi = 0.
\end{equation}
But looking at the second equality in \eqref{L1} (which also holds for $f \in \mathcal{B}^\mathscr{A}(\mathbb{R}^n)$ -- see the Appendix \ref{sect:rieffel}) we see that \eqref{eq:dominatedconvergence2} actually implies that, for every fixed $x \in \mathbb{R}^n$,
\begin{align} \label{eq:rieffel1}
& \lim_{m \rightarrow + \infty} L_{(\mathcal{R} \circ S)(A \circ L_{\tilde{e}_m})}(g)(x) \\
& = \lim_{m \rightarrow + \infty} \frac{1}{(2 \pi)^{n / 2}} \int_{\mathbb{R}^n} (\mathcal{R} \circ S)(A \circ L_{\tilde{e}_m}) \left(x - \frac{1}{2 \pi} J\xi \right) \, \mathcal{F}(g)(\xi) \, e^{i \langle \xi, x \rangle} \, d\xi \nonumber \\
& = \frac{1}{(2 \pi)^{n / 2}} \int_{\mathbb{R}^n} (\mathcal{R} \circ S)(A) \left(x - \frac{1}{2 \pi} J\xi \right) \, \mathcal{F}(g)(\xi) \, e^{i \langle \xi, x \rangle} \, d\xi \nonumber \\
& = L_{(\mathcal{R} \circ S)(A)}(g)(x). \nonumber
\end{align}

On the other hand, it is the content of Proposition \ref{prop:mainthm1} that $A$ sends $\mathcal{S}^\mathscr{A}(\mathbb{R}^n)$ into $\mathcal{S}^\mathscr{A}(\mathbb{R}^n)$, so the set $\left\{ L_{A(\tilde{e}_m)}: m \in \mathbb{N}^* \right\}$ consists of well-defined operators on $\mathcal{S}^\mathscr{A}(\mathbb{R}^n)$ (see \eqref{rieffelop}). Moreover, since by hypothesis $A$ commutes with the operator $R_h$, for every $h \in \mathcal{S}^\mathscr{A}(\mathbb{R}^n)$, we obtain
\begin{equation*}
(A \circ L_{\tilde{e}_m})(h) = A(L_{\tilde{e}_m}(h)) = (A \circ R_h)(\tilde{e}_m) = (R_h \circ A)(\tilde{e}_m) = R_h(A(\tilde{e}_m)) = L_{A(\tilde{e}_m)}(h),
\end{equation*}
for all $m \in \mathbb{N}^*$ and $h \in \mathcal{S}^\mathscr{A}(\mathbb{R}^n)$, so 
\begin{equation*}
(\mathcal{R} \circ S)(A \circ L_{\tilde{e}_m}) = (\mathcal{R} \circ S)(L_{A(\tilde{e}_m)}) = A(\tilde{e}_m), \qquad m \in \mathbb{N}^*
\end{equation*}
(see Equation \eqref{eq:left_inverse}). Hence, $L_{(\mathcal{R} \circ S)(A \circ L_{\tilde{e}_m})} = L_{A(\tilde{e}_m)} = A \circ L_{\tilde{e}_m}$, for all $m \in \mathbb{N}^*$, so by Lemma \ref{lem:approxidentity} the equality
\begin{equation} \label{eq:rieffel2}
\lim_{m \rightarrow + \infty} L_{(\mathcal{R} \circ S)(A \circ L_{\tilde{e}_m})}(g) = \lim_{m \rightarrow + \infty} (A \circ L_{\tilde{e}_m})(g) = A(g)
\end{equation}
holds in $E_n$.

Now we must find an argument to combine Equations \eqref{eq:rieffel1} and \eqref{eq:rieffel2} and conclude that $A = L_{(\mathcal{R} \circ S)(A)}$. Since $A(g)$ belongs to $\mathcal{S}^\mathscr{A}(\mathbb{R}^n)$, Equation \eqref{eq:rieffel2} can be translated in terms of integrals:
\begin{equation*}
\lim_{m \rightarrow + \infty} \left\| \int_{\mathbb{R}^n} [L_{(\mathcal{R} \circ S)(A \circ L_{\tilde{e}_m})}(g)(x) - A(g)(x)]^*[L_{(\mathcal{R} \circ S)(A \circ L_{\tilde{e}_m})}(g)(x) - A(g)(x)] \, dx \right\|_\mathscr{A}^{1 / 2} = 0.
\end{equation*}
Fix a positive linear functional $\rho$ on $\mathscr{A}$. Then
\begin{align*}
& \lim_{m \rightarrow + \infty} \int_{\mathbb{R}^n} \left| \rho \left( [L_{(\mathcal{R} \circ S)(A \circ L_{\tilde{e}_m})}(g)(x) - A(g)(x)]^*[L_{(\mathcal{R} \circ S)(A \circ L_{\tilde{e}_m})}(g)(x) - A(g)(x)] \right) \right| dx \\
& = \lim_{m \rightarrow + \infty} \int_{\mathbb{R}^n} \rho \left( [L_{(\mathcal{R} \circ S)(A \circ L_{\tilde{e}_m})}(g)(x) - A(g)(x)]^*[L_{(\mathcal{R} \circ S)(A \circ L_{\tilde{e}_m})}(g)(x) - A(g)(x)] \right) dx = 0
\end{align*}
so, by a standard result in measure theory \cite[Theorem 3.12, p.~68]{rudin_real_and_complex}, we may extract a subsequence $(L_{(\mathcal{R} \circ S)(A \circ L_{\tilde{e}_{m_k}})}(g))_{k \in \mathbb{N}}$ (which depends on $\rho$) such that
\begin{equation*}
\left( \rho \circ \left( [L_{(\mathcal{R} \circ S)(A \circ L_{\tilde{e}_{m_k}})}(g) - A(g)]^*[L_{(\mathcal{R} \circ S)(A \circ L_{\tilde{e}_{m_k}})}(g) - A(g)] \right) \right)_{k \in \mathbb{N}} \text{ is pointwise convergent to } 0
\end{equation*}
on a subset $S_\rho \subseteq \mathbb{R}^n$ whose complement has Lebesgue measure equal to zero. Combining Equation \eqref{eq:rieffel1} with the continuity of $\rho$, we obtain
\begin{align*}
& \rho \left( [L_{(\mathcal{R} \circ S)(A)}(g)(x) - A(g)(x)]^*[L_{(\mathcal{R} \circ S)(A)}(g)(x) - A(g)(x)] \right) \\ & = \lim_{k \rightarrow + \infty} \rho \left( [L_{(\mathcal{R} \circ S)(A \circ L_{\tilde{e}_{m_k}})}(g)(x) - A(g)(x)]^*[L_{(\mathcal{R} \circ S)(A \circ L_{\tilde{e}_{m_k}})}(g)(x) - A(g)(x)] \right) = 0,
\end{align*}
for all $x \in S_\rho$. But $L_{(\mathcal{R} \circ S)(A)}(g)$ and $A(g)$ both belong to $\mathcal{S}^\mathscr{A}(\mathbb{R}^n)$, which establishes that the above equality actually holds for all $x \in \mathbb{R}^n$. Moreover, since $\rho$ is arbitrary, we get \cite[Theorem 3.3.6, p.~90]{murphy}
\begin{equation*}
[L_{(\mathcal{R} \circ S)(A)}(g)(x) - A(g)(x)]^*[L_{(\mathcal{R} \circ S)(A)}(g)(x) - A(g)(x)] = 0, \qquad x \in \mathbb{R}^n.
\end{equation*}
Hence, using the C$^*$-identity for the norm $\|\, \cdot \,\|_\mathscr{A}$ we conclude that $L_{(\mathcal{R} \circ S)(A)}(g)(x) = A(g)(x)$, for all $x \in \mathbb{R}^n$. By the arbitrariness of $g$, we conclude that $A = L_{(\mathcal{R} \circ S)(A)}$, which is exactly what we wanted to prove.

\begin{appendix}

\section{A few remarks on Rieffel's deformed algebra} \label{sect:rieffel}

As noted in the Introduction, given a skew-symmetric linear transformation $J$ on $\mathbb{R}^n$ and $f \in \mathcal{B}^\mathscr{A}(\mathbb{R}^n)$, we may define a linear operator via the iterated integral
\begin{equation} \label{eq:rieffelop_appendix}
L_f(g)(x) := \int_{\mathbb{R}^n} \left( \int_{\mathbb{R}^n} f(x + J\xi) \, g(x + y) \, e^{2 \pi i \langle \xi, y \rangle} \, dy \right) d\xi, \qquad g \in \mathcal{S}^\mathscr{A}(\mathbb{R}^n), \, x \in \mathbb{R}^n,
\end{equation}
in the sense that integration in the variable $y$ has to be performed before integration over $\xi$. In fact, since the Fourier transform maps $\mathcal{S}^\mathscr{A}(\mathbb{R}^n)$ continuously into itself \cite[p.~117]{analysis-bochner}, we see that the map $\xi \longmapsto \int_{\mathbb{R}^n} f(x + J\xi) \, g(x + y) \, e^{2 \pi i \langle \xi, y \rangle} \, dy$ belongs to $\mathcal{S}^\mathscr{A}(\mathbb{R}^n)$, so
\begin{equation*}
\int_{\mathbb{R}^n} \left\| \int_{\mathbb{R}^n} f(x + J\xi) \, g(x + y) \, e^{2 \pi i \langle \xi, y \rangle} \, dy \right\|_\mathscr{A} \, d\xi < + \infty.
\end{equation*}

Using well-known techniques in the theory of pseudodifferential operators one can write the iterated integral in Equation \eqref{eq:rieffelop_appendix} as a Bochner integral of an $\mathscr{A}$-valued function on $\mathbb{R}^{2n}$ and, as a consequence, conclude that $L_f$ maps $\mathcal{S}^\mathscr{A}(\mathbb{R}^n)$ into itself \cite[Proposition 3.3, p.~25]{rieffel}, a fact which we will now show in detail. After successive integration by parts we get, for every fixed $\xi \in \mathbb{R}^n$ and $N \geq 1$, the equality
\begin{align*}
\int_{\mathbb{R}^n} f(x + J\xi) \, g(x + y) \, e^{2 \pi i \langle \xi, y \rangle} \, dy
& = \int_{\mathbb{R}^n} f(x + J\xi) \, g(x + y) \, \frac{(1 - \Delta_y)^N e^{2 \pi i \langle \xi, y \rangle}}{(1 + 4 \pi^2 |\xi|^2)^N} \, dy \\
& = \int_{\mathbb{R}^n} f(x + J\xi) \, [(1 - \Delta_y)^N g](x + y) \, \frac{e^{2 \pi i \langle \xi, y \rangle}}{(1 + 4 \pi^2 |\xi|^2)^N} \, dy.
\end{align*}
Hence, choosing $N > n/2$ and using Equation \eqref{eq:rieffelop_appendix}, we see that $L_f(g)(x)$ equals
\begin{equation*}
\int_{\mathbb{R}^n} \int_{\mathbb{R}^n} f(x + J\xi) \, [(1 - \Delta_y)^N g](x + y) \, \frac{e^{2 \pi i \langle \xi, y \rangle}}{(1 + 4 \pi^2 |\xi|^2)^N} \, dy \, d\xi,
\end{equation*}
where the integrand is absolutely convergent in the variable $(y, \xi)$.

Repeating the procedure of integrating by parts, but with respect to the $\xi$ variable, instead, we get
\begin{align} \label{eq:rieffelop_appendix_2}
& L_f(g)(x) \\
& = \int_{\mathbb{R}^n} \int_{\mathbb{R}^n} e^{2 \pi i \langle \xi, y \rangle} \, (1 - \Delta_\xi)^M \left\{ \frac{f(x + J\xi)}{(1 + 4 \pi^2 |\xi|^2)^N} \right\} \, [(1 - \Delta_y)^N g](x + y) \, \frac{1}{(1 + 4 \pi^2 |y|^2)^M} \, dy \, d\xi \nonumber
\end{align}
for all $N > n/2$ and $M \geq 1$.

Differentiating the above formula under the integral sign we see that for each fixed $\alpha, \beta \in \mathbb{N}^n$, the expression $x^\alpha \, \partial^\beta L_f(g)(x)$ equals the sum
\begin{align*}
\sum_{|\gamma| \leq |\beta|} \binom{\beta}{\gamma} \cdot x^\alpha \cdot \int_{\mathbb{R}^n} \int_{\mathbb{R}^n} e^{2 \pi i \langle \xi, y \rangle} \, (1 - \Delta_\xi)^M
& \left\{ \frac{\partial^\gamma f(x + J\xi)}{(1 + 4 \pi^2 |\xi|^2)^N} \right\} \, \\
& \cdot [(1 - \Delta_y)^N \partial^{\beta - \gamma} g](x + y) \, \frac{1}{(1 + 4 \pi^2 |y|^2)^M} \, dy \, d\xi,
\end{align*}
for sufficiently large $M$ and $N$. Therefore,
\begin{align*}
\left\| x^\alpha \, \partial^\beta L_f(g)(x) \right\|_\mathscr{A} \leq \sum_{|\gamma| \leq |\beta|}
& \binom{\beta}{\gamma} \int_{\mathbb{R}^n} \int_{\mathbb{R}^n} \left\| (1 - \Delta_\xi)^M \left\{ \frac{\partial^\gamma f(x + J\xi)}{(1 + 4 \pi^2 |\xi|^2)^N} \right\} \right\|_\mathscr{A} \\
& \cdot \left\{ (1 + |x|^2)^{|\alpha| / 2} \, \left\| [(1 - \Delta_y)^N \partial^{\beta - \gamma} g](x + y) \right\|_\mathscr{A} \right\} \frac{1}{(1 + 4 \pi^2 |y|^2)^M} \, dy \, d\xi,
\end{align*}
Using Peetre's inequality \cite[(3.6)]{kohn-nirenberg} we get
\begin{equation*}
(1 + |x|^2)^{|\alpha| / 2} \leq 2^{|\alpha| / 2} \, (1 + |y|^2)^{|\alpha| / 2} \, (1 + |x + y|^2)^{|\alpha| / 2},
\end{equation*}
so the sum above may be majorized by the expression
\begin{align*}
2^{|\alpha| / 2} \sum_{|\gamma| \leq |\beta|} \binom{\beta}{\gamma}
& \int_{\mathbb{R}^n} \int_{\mathbb{R}^n}
\left\| (1 - \Delta_\xi)^M \left\{ \frac{\partial^\gamma f(x + J\xi)}{(1 + 4 \pi^2 |\xi|^2)^N} \right\} \right\|_\mathscr{A} \\
& \cdot \left\{ (1 + |x + y|^2)^{|\alpha| / 2} \, \left\| [(1 - \Delta_y)^N \partial^{\beta - \gamma} g](x + y) \right\|_\mathscr{A} \right\} \, \frac{(1 + 4 \pi^2 |y|^2)^{|\alpha| / 2}}{(1 + 4 \pi^2 |y|^2)^M} \, dy \, d\xi,
\end{align*}
which will be a real number as long as we choose $N > n / 2$ and $M > (n + |\alpha|) / 2$. Finally, this shows that the expression $\sup_{x \in \mathbb{R}^n} \left\| x^\alpha \, \partial^\beta L_f(g)(x) \right\|_\mathscr{A}$ may be majorized by a linear combination of terms of the form 
\begin{align*}
\sup_{x \in \mathbb{R}^n} \left\{ \|\partial^{\delta_1} f(x)\|_\mathscr{A} \right\}
& \, \sup_{x \in \mathbb{R}^n} \left\{ (1 + |x|^2)^{|\alpha| / 2} \, \|\partial^{\delta_2} g(x)\|_\mathscr{A} \right\} \\
& \cdot \int_{\mathbb{R}^n} \frac{1}{(1 + 4 \pi^2 |y|^2)^{M - |\alpha| / 2}} \, dy \, \int_{\mathbb{R}^n} \frac{1}{(1 + 4 \pi^2 |\xi|^2)^Q} \, d\xi,
\end{align*}
where $\delta_1, \delta_2 \in \mathbb{N}^n$ are multiindices and $Q \geq N$. Hence, since $f \in \mathcal{B}^\mathscr{A}(\mathbb{R}^n)$ and $g \in \mathcal{S}^\mathscr{A}(\mathbb{R}^n)$, these terms are all real numbers. This shows that $L_f$ maps $\mathcal{S}^\mathscr{A}(\mathbb{R}^n)$ continuously into itself.

Making use of oscillatory integrals (see \cite[Chapter 1]{rieffel} \cite[pp.~66--69]{cordes}), we can attribute meaning to the integral in Equation \eqref{eq:rieffelop_appendix} even when both $f$ and $g$ belong to $\mathcal{B}^\mathscr{A}(\mathbb{R}^n)$, defining Rieffel's deformed product \cite[p.~23]{rieffel} to be
\begin{equation} \label{eq:rieffelprod_appendix}
(f \times_J g)(x) := \int_{\mathbb{R}^n} \int_{\mathbb{R}^n} f(x + J\xi) \, g(x + y) \, e^{2 \pi i \langle \xi, y \rangle} \, dy \, d\xi
\end{equation}
\begin{equation*}
:= \int_{\mathbb{R}^n} \int_{\mathbb{R}^n} e^{2 \pi i \langle \xi, y \rangle} \, (1 - \Delta_\xi)^M \left\{ \frac{f(x + J\xi)}{(1 + 4 \pi^2 |\xi|^2)^N} \right\} \, [(1 - \Delta_y)^N g](x + y) \, \frac{1}{(1 + 4 \pi^2 |y|^2)^M} \, dy \, d\xi, \nonumber
\end{equation*}
where $x \in \mathbb{R}^n$ and $M, N > n/2$. It can be shown that this definition is independent of the choices of $M$ and $N$. Comparing the definition given in Equation \eqref{eq:rieffelprod_appendix} with Equation \eqref{eq:rieffelop_appendix_2} we see that, if $f \in \mathcal{B}^\mathscr{A}(\mathbb{R}^n)$ and $g \in \mathcal{S}^\mathscr{A}(\mathbb{R}^n)$, then $L_f(g) = f \times_J g$. Moreover, after differentiating under the integral sign, one sees that $f \times_J g$ also belongs to $\mathcal{B}^\mathscr{A}(\mathbb{R}^n)$.

As a consequence of a version of the Calder\'on-Vaillancourt inequality for Hilbert C$^*$-modules \cite[Theorem 3.2]{cabralforgermelo}, every pseudodifferential operator $\text{Op}(a)$ with symbol $a \in \mathcal{B}^\mathscr{A}(\mathbb{R}^{2n})$ (see Equation \eqref{eq:pseudos}) can be extended by continuity to a bounded operator (which will also be denoted by $\text{Op}(a)$) on the Hilbert $\mathscr{A}$-module $E_n$. Moreover, its operator norm satisfies
\begin{equation*}
\|\text{Op}(a)\| \leq C \, \max_{\beta, \gamma \leq \mathring{\alpha}} \sup \left\{ \|\partial_x^\beta \partial_\xi^\gamma a(x, \xi)\|_\mathscr{A}: x, \xi \in \mathbb{R}^n \right\}, \qquad \mathring{\alpha} = (1, \ldots, 1) \in \mathbb{N}^n,
\end{equation*}
for a certain constant $C > 0$. In particular, this conclusion holds for every $L_f$, $f \in \mathcal{B}^\mathscr{A}(\mathbb{R}^n)$, since $L_f$ is a pseudodifferential operator $\text{Op}(a)$ with symbol $a \in \mathcal{B}^\mathscr{A}(\mathbb{R}^{2n})$ given by $a(x, \xi) := f(x - J\xi / (2 \pi))$.

Using oscillatory integrals, we can show that the set $\left\{ \text{Op}(a): a \in \mathcal{B}^\mathscr{A}(\mathbb{R}^{2n}) \right\}$ of pseudodifferential operators on $E_n$ is a $*$-subalgebra of the C$^*$-algebra $\mathcal{L}_\mathscr{A}(E_n)$ of adjointable operators on $E_n$ (for the proof that each $\text{Op}(a)$ is an adjointable operator on $E_n$, see \cite[Proposition 3.3]{cabralforgermelo}). In fact, the restriction of the involution and composition maps to $\left\{ \text{Op}(a): a \in \mathcal{B}^\mathscr{A}(\mathbb{R}^{2n}) \right\}$ are defined, respectively, by $\text{Op}(a) \longmapsto \text{Op}(a^\dagger)$ and $\text{Op}(a) \circ \text{Op}(b) \longmapsto \text{Op}(a \times b)$, with corresponding symbols given by \cite[(3.20), (3.21)]{cabralforgermelo}
\begin{equation} \label{eq:involution_appendix}
a^\dagger(x, \xi) := \frac{1}{(2 \pi)^n} \int_{\mathbb{R}^n} \int_{\mathbb{R}^n} e^{-i \langle z, \eta \rangle} \, a(x - z, \xi - \eta)^* \, dz \, d\eta
\end{equation}
\begin{equation*} := \frac{1}{(2 \pi)^n} \int_{\mathbb{R}^n} \int_{\mathbb{R}^n} e^{-i \langle z, \eta \rangle} \, (1 + |z|^2)^{-N} (1 - \Delta_\eta)^N \Bigl\{ (1 + |\eta|^2)^{-M} (1 - \Delta_z)^M \Bigl[ a(x - z, \xi - \eta)^* \Bigr] \Bigr\} dz \, d\eta, \end{equation*}
and
\begin{equation} \label{eq:product_appendix}
(a \times b)(x, \xi) := \frac{1}{(2 \pi)^n} \int_{\mathbb{R}^n} \int_{\mathbb{R}^n} e^{-i \langle z, \eta \rangle} \, a(x, \xi - \eta) \, b(x - z, \xi) \, dz \, d\eta
\end{equation}
\begin{equation*}
:= \frac{1}{(2 \pi)^n} \int_{\mathbb{R}^n} \int_{\mathbb{R}^n} e^{-i \langle z, \eta \rangle} \, (1 + |z|^2)^{-N} (1 - \Delta_\eta)^N \Bigl\{ (1 + |\eta|^2)^{-M} (1 - \Delta_z)^M \Bigl[ a(x, \xi - \eta) \, b(x - z, \xi) \Bigr] \Bigr\} dz \, d\eta,
\end{equation*}
for all $x, \xi \in \mathbb{R}^n$, where $M, N > n/2$. Moreover, the above definitions are independent of the choices of $M$ and $N$. Specializing these formulas to the operators $L_f$, $f \in \mathcal{B}^\mathscr{A}(\mathbb{R}^n)$, shows that the product $\times_J$ is associative (see the last paragraph of \cite[Subsection ``Pseudodifferential operators with $\mathcal{C}$-valued symbols'']{cabralforgermelo}) and that Rieffel's deformed algebra $\left\{ L_f: f \in \mathcal{B}^\mathscr{A}(\mathbb{R}^n) \right\}$ is also a $*$-subalgebra of $\mathcal{L}_\mathscr{A}(E_n)$, with the involution satisfying $(L_f)^* = L_{f^*}$. Indeed, if $f$ belongs to $\mathcal{S}^\mathscr{A}(\mathbb{R}^n)$, then the function $a$ defined by $a(x, \xi) := f(x - J\xi / (2 \pi))$ belongs to $\mathcal{S}^\mathscr{A}(\mathbb{R}^{2n})$, so using Equation \eqref{eq:involution_appendix} and the fact that $\langle J \eta, \eta \rangle = 0$, for all $\eta \in \mathbb{R}^n$, we can perform the transformation $z \mapsto z + \frac{J \eta}{2 \pi}$ of the integration variable $z$ to obtain
\begin{align*}
a^\dagger(x, \xi)
& = \frac{1}{(2 \pi)^n} \int_{\mathbb{R}^n} \int_{\mathbb{R}^n} e^{-i \langle z, \eta \rangle} \, f \left( x - z - \frac{J(\xi - \eta)}{2 \pi} \right)^* \, dz \, d\eta \\
& = \frac{1}{(2 \pi)^n} \int_{\mathbb{R}^n} \int_{\mathbb{R}^n} e^{-i \langle z + \frac{J \eta}{2 \pi}, \eta \rangle} \, f \left( x - z - \frac{J \xi}{2 \pi} \right)^* \, dz \, d\eta \\
& = \frac{1}{(2 \pi)^{n/2}} \int_{\mathbb{R}^n} \mathcal{F} \left( f \left(x - \, \cdot \, - \frac{J \xi}{2 \pi} \right)^* \right)(\eta) \, d\eta = f(x - J\xi / (2 \pi))^* =: f^*(x - J\xi / (2 \pi)),
\end{align*}
for all $x, \xi \in \mathbb{R}^n$. For the general case in which $f$ belongs to $\mathcal{B}^\mathscr{A}(\mathbb{R}^n)$, let $\phi$ be a compactly supported complex-valued smooth function on $\mathbb{R}^{2n}$ satisfying $0 \leq \phi \leq 1$ which equals 1 on a neighborhood of 0 and define, for each $m \in \mathbb{N}^*$, the functions $f_m(y) := \phi \left( \frac{y}{m} \right) f(y)$, $y \in \mathbb{R}^n$, and $a_m(x, \xi) := f_m(x - J\xi / (2 \pi))$, $(x, \xi) \in \mathbb{R}^{2n}$. Then $(a_m)_{m \in \mathbb{N}^*}$ converges pointwise to $a \colon (x, \xi) \longmapsto f(x - J\xi / (2 \pi))$, so by the continuity of the involution operation on $\mathscr{A}$, we see that $((a_m)^\dagger(x, \xi) = f_m^*(x - J\xi / (2 \pi)))_{m \in \mathbb{N}^*}$ converges to $f^*(x - J\xi / (2 \pi))$, for each fixed $(x, \xi) \in \mathbb{R}^{2n}$. On the other hand, after successive integration by parts and applications of the Leibniz rule, we get
\begin{align*}
(a_m)^\dagger(x, \xi)
& = \frac{1}{(2 \pi)^n} \int_{\mathbb{R}^n} \int_{\mathbb{R}^n} e^{-i \langle z, \eta \rangle} \, f_m \left( x - z - \frac{J(\xi - \eta)}{2 \pi} \right)^* \, dz \, d\eta \\
= \frac{1}{(2 \pi)^n}
& \int_{\mathbb{R}^n} \int_{\mathbb{R}^n} e^{-i \langle z, \eta \rangle} \, (1 + |z|^2)^{-N} \\
& \cdot (1 - \Delta_\eta)^N \left\{ (1 + |\eta|^2)^{-M} (1 - \Delta_z)^M \left[ f_m \left( x - z - \frac{J(\xi - \eta)}{2 \pi} \right)^* \right] \right\} \, dz \, d\eta,
\end{align*}
which equals
\begin{align*}
\frac{1}{(2 \pi)^n}
& \int_{\mathbb{R}^n} \int_{\mathbb{R}^n} e^{-i \langle z, \eta \rangle} \, (1 + |z|^2)^{-N} \\
& \cdot (1 - \Delta_\eta)^N \left\{ (1 + |\eta|^2)^{-M} (1 - \Delta_z)^M \left[ f \left( x - z - \frac{J(\xi - \eta)}{2 \pi} \right)^* \right] \right\} \, \phi \left( \frac{x - z - \frac{J(\xi - \eta)}{2 \pi}}{m} \right) dz \, d\eta
\end{align*}
plus a linear combination of terms of the form
\begin{align*}
\frac{1}{m^{|\gamma_2|}} \cdot \frac{1}{(2 \pi)^n}
& \int_{\mathbb{R}^n} \int_{\mathbb{R}^n} e^{-i \langle z, \eta \rangle} \\
& \cdot (\partial^{\gamma_1} f) \left( x - z - \frac{J(\xi - \eta)}{2 \pi} \right)^* \, (\partial^{\gamma_2} \phi) \left( \frac{x - z - \frac{J(\xi - \eta)}{2 \pi}}{m} \right) \, g_{\gamma_1, \gamma_2}(z, \eta) \, dz \, d\eta,
\end{align*}
with $\gamma_1 \in \mathbb{N}^n$, $0 \neq \gamma_2 \in \mathbb{N}^n$ and $g_{\gamma_1, \gamma_2} \in L^1(\mathbb{R}^{2n})$.

Therefore, applying Fubini's theorem and the dominated convergence theorem, we obtain 
\begin{align*}
(a_m)^\dagger(x, \xi)
& = \frac{1}{(2 \pi)^n} \int_{\mathbb{R}^n} \int_{\mathbb{R}^n} e^{-i \langle z, \eta \rangle} \, f_m \left( x - z - \frac{J(\xi - \eta)}{2 \pi} \right)^* \, dz \, d\eta \\
& = \frac{1}{(2 \pi)^n} \int_{\mathbb{R}^n} \int_{\mathbb{R}^n} e^{-i \langle z, \eta \rangle} \, (1 + |z|^2)^{-N} \\
& \cdot (1 - \Delta_\eta)^N \left\{ (1 + |\eta|^2)^{-M} (1 - \Delta_z)^M \left[ f_m \left( x - z - \frac{J(\xi - \eta)}{2 \pi} \right)^* \right] \right\} \, dz \, d\eta \\
& \stackrel{m \rightarrow + \infty}{\longrightarrow} \frac{1}{(2 \pi)^n} \int_{\mathbb{R}^n} \int_{\mathbb{R}^n} e^{-i \langle z, \eta \rangle} \, (1 + |z|^2)^{-N} \\
& \cdot (1 - \Delta_\eta)^N \left\{ (1 + |\eta|^2)^{-M} (1 - \Delta_z)^M \left[ f \left( x - z - \frac{J(\xi - \eta)}{2 \pi} \right)^* \right] \right\} \, dz \, d\eta =: a^\dagger(x, \xi),
\end{align*}
allowing us to conclude that $a^\dagger(x, \xi) = f^*(x - J\xi / (2 \pi))$, for all $(x, \xi) \in \mathbb{R}^{2n}$.

\end{appendix}

\section*{Acknowledgements}

We would like to thank Artur Andrade, who has provided crucial ideas for obtaining the equivalence of topologies in Proposition \ref{prop:s_equals_cinfty}.

\end{document}